\newtheorem{theorem}{Theorem}[section]
\newtheorem{lemma}[theorem]{Lemma}
\newtheorem{proposition}[theorem]{Proposition}
\newtheorem{corollary}[theorem]{Corollary}
\theoremstyle{remark}
\newtheorem{remark}[theorem]{\bf Remark}
\renewcommand*{\backref}[1]{}\renewcommand*{\backrefalt}[4]{\ifcase #1 (\tt not cited)\or (\tt cited on page~#2)\else (\tt cited on pages~#2)\fi}
	\def\NN{\mathbb{N}}
\def\QQ{\mathbb{Q}}
\def\ZZ{\mathbb{Z}}
\def\G{\mathrm{G}}
\def\r2{\mathrm{rank}}
\begin{document}
\selectlanguage{english}
\title[On the maximal unramified pro-2-extension...]{On the maximal unramified pro-2-extension of $\ZZ_2$-extension of certain real biquadratic fields}
	
 	\author[M. M. Chems-Eddin]{Mohamed Mahmoud Chems-Eddin}
  \address{Mohamed Mahmoud CHEMS-EDDIN: Department of Mathematics, Faculty of Sciences Dhar El Mahraz,
  	Sidi Mohamed Ben Abdellah University, Fez,  Morocco}
  \email{2m.chemseddin@gmail.com}

\subjclass[2020]{11R29; 11R23;   11R18; 11R20.}
\keywords{Cyclotomic $Z_p$-extension, Maximal unramified pro-2-extension, 2-Class group, Iwasawa module.}
	
\maketitle
	\begin{abstract}  For any   positive integer $n$, we show that there exists a real number field  $k$ (resp. $k'$) of degree $2^{n+2}$ whose $2$-class group is isomorphic to   $\ZZ/2\ZZ\times \ZZ/2\ZZ$ such that the Galois group of the maximal unramified extension of $k$ (resp. $k'$) over $k$ (resp. $k'$) is abelian (resp. non abelian, more precisely isomorphic to   $Q_8$ or $D_8$, the quaternion and the dihedral group of order $8$ respectively).
In fact, we construct the first examples in the literature of families of real biquadratic fields  for which the layers of the cyclotomic $\mathbb Z_2$-extension satisfy the previous   conditions and  whose unramified abelian $2$-Iwasawa modules are isomorphic to $\ZZ/2\ZZ\times \ZZ/2\ZZ$; hence these fields satisfy Greenberg’s conjecture.  
\end{abstract}
	
	   \section{\bf Introduction}

	 Let  $k$ be a   number field and let $\ell$ be a prime number. Denote by $\mathbf{C}l(k)$ (resp. $\mathbf{C}l_\ell(k)$, $E_k$) the class group (resp. the $\ell$-class group, the unit group) of $k$.
	 Let $k=k_0\subset k_1 \subset k_2 \subset\cdots \subset k_n \subset \cdots  \subset k_\infty$
	 be the cyclotomic $\ZZ_\ell$-extension of $k$. 
	 In particular, for an odd prime number $\ell$, let $ \QQ_{\ell, n}$   be the unique real subfield  of the cyclotomic field  $\QQ(\zeta_{\ell^{n+1}}) $ of degree $\ell^n$ over $\QQ$,  
	 and for $\ell=2$,   let $  \QQ_{2, n}$ be the field $\QQ(2\cos( {2\pi}/{2^{n+2}}))$, for all $n\geq 1$.
	 Then   $k_n= k\QQ_{\ell, n}$ and $k_\infty= \bigcup k_n$. The field $k_n$ is called the $n$th layer of the cyclotomic $\ZZ_\ell$-extension of $k$.
	  The inverse limit $X(k)=\varprojlim \mathbf{C}l_\ell(k_n)$
	 with respect to the norm maps is called the Iwasawa module for $k_\infty/k$. A spectacular result due to Iwasawa, affirms that  there exist integers $\lambda$,  $\mu\geq 0$ and  $\nu$, all independent of $n$, and  an integer $n_0$ such that:
	 \begin{eqnarray}\label{iwasawa}h_\ell(k_n)=\ell^{\lambda n+\mu \ell^n+\nu},\end{eqnarray}
	 for all $n\geq n_0$.  Here $h_\ell(k)$ denotes the $\ell$-class number of a number field $k$. The integers $\lambda$,  $\mu$ and  $\nu$ are called   Iwasawa invariants of $k_\infty/k$ (cf. \cite{iwasawa59}).

	 Greenberg conjectured that the invariants $\mu$ and $\lambda$ must be equal to $0$ for totally real number
	 fields (cf. \cite{Greenberg}) and it was further proved by Ferrero and Washington    (cf. \cite{FerreroWashington}) that the $\mu$-invariant always vanishes
	 for the cyclotomic $\mathbb Z_\ell$-extension when the number field is abelian over the field $\mathbb{Q}$ of rational numbers.
	 Various mathematicians have worked towards proving the vanishing of the $\lambda$-invariant for certain number
	 fields where the fundamental discriminant has small number of prime factors (cf. \cite{7,chemskatharina,1,jerrair2,2,3,8,4,5,6}).
	 Consider $\G_{k_\infty}=\mathrm{Gal}(\mathcal{L}(k_\infty)/k_\infty)$, the Galois group of the maximal unramified pro-$\ell$-extension $\mathcal{L}(k_\infty)$ of $k_\infty$. By class field theory, the Iwasawa module  $X(k)$ is isomorphic to the maximal abelian quotient group of $\G_{k_\infty}$, which is  
	 $\mathrm{Gal}(L(k_\infty)/k_\infty)$ the Galois group of the maximal unramified abelian pro-$\ell$-extension $L(k_\infty)$ of $k_\infty$. Note that   Greenberg's conjecture means that
	 $  \# X(k)=[L(k_\infty):k_\infty]$ is finite. The investigation of the structure of the group $\G_{k_\infty}$ for a given number field was 
	  of major importance in point of view of  many mathematicians (cf. \cite{d,e,a,c,ca,b,f}). 
	 
	 Note that all these investigations concern the case of (real and imaginary) quadratic  
	 fields and $\ell=2$.   In the present work,  we construct two  families of real biquadratic fields $F$ such that, for all $n\geq 0$,  
	 $\mathbf{C}l_2(F_n)\simeq \ZZ/2\ZZ\times \ZZ/2\ZZ$ (this implies that $\lambda=\mu=0$ for $F_\infty/F$), and such that the first family satisfy  $\G_{F_\infty}=\mathrm{Gal}(\mathcal{L}(F_\infty)/F_\infty)$ is abelian, while  the second family satisfy $\G_{F_\infty}$ is non-abelian, more precisely,   isomorphic to  $Q_8$ or $D_8$ (the quaternion group and dihedral   of order $8$ respectively). More precisely, we have:

	 \begin{theorem}[\bf The Main Theorem]\label{maintheorem}    
	 	Let  $q$,  $p$ and $s$ be  three distinct prime integers such that  $p\equiv   5\pmod 8$, $ q\equiv3\pmod 8$ and    $   s\equiv 3\pmod 4$ with  $\left(\frac{	p}{	q}\right)=\left(\frac{p}{s}\right)=1 $.    Put  $F:=\mathbb{Q}(\sqrt{pq}, \sqrt{ps} )$.    
	 	Then, for all $n\geq 0$, we have:
	 	$$	\mathbf{C}l_2(F_n) \simeq \ZZ/2\ZZ\times \ZZ/2\ZZ.$$
	 	Therefore, 	$X(F)   \simeq \ZZ/2\ZZ\times \ZZ/2\ZZ.$ 
	 	Moreover, letting $C(K):=\QQ(\sqrt {2p},  \sqrt{2s},\sqrt{q})$, we have:
	 	\begin{enumerate}[$1)$]
	 	   \item If $h_2(pqs)=h_2(C(K))=4$, then the group  $ \mathrm{Gal}(\mathcal{L}(F_n)/F_n)$ is abelian.
	 		\item If $h_2(pqs)=h_2(C(K))=8$,  then the group $ \mathrm{Gal}(\mathcal{L}(F_n)/F_n)$ is non abelian, more precisely, it is isomorphic to $Q_8$ or $D_8$, the quaternion and the dihedral group of order $8$ respectively.
	 	\end{enumerate}
	 	Here $h_2(d)$    denotes the $2$-class number of the real quadratic field $\mathbb{Q}(  \sqrt{d} )$  and  $h_2(k)$  denotes the $2$-class number  of a number  field  $k$.
	 \end{theorem}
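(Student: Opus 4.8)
The plan is to first pin down $\mathbf{C}l_2(F)$, then carry it up the cyclotomic tower, and finally read off the $2$-class field tower of each $F_n$. For the ground field $F=\mathbb{Q}(\sqrt{pq},\sqrt{ps})$, whose three quadratic subfields are $\mathbb{Q}(\sqrt{pq})$, $\mathbb{Q}(\sqrt{ps})$ and $\mathbb{Q}(\sqrt{qs})$, I would first extract from Rédei's genus theory, under the hypotheses $p\equiv5$, $q\equiv3\pmod8$, $s\equiv3\pmod4$ and $\left(\frac pq\right)=\left(\frac ps\right)=1$, the $2$-class numbers $h_2(pq),h_2(ps),h_2(qs)$ together with the signs of the norms of the fundamental units of these fields, and then feed them into Kuroda's class number formula for the $V_4$-extension $F/\mathbb{Q}$,
\[
h_2(F)=\tfrac14\,q(F)\,h_2(pq)\,h_2(ps)\,h_2(qs),\qquad q(F)=[E_F:E_{\mathbb{Q}(\sqrt{pq})}E_{\mathbb{Q}(\sqrt{ps})}E_{\mathbb{Q}(\sqrt{qs})}],
\]
to obtain $h_2(F)=4$. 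To see that $\mathbf{C}l_2(F)\simeq\mathbb{Z}/2\mathbb{Z}\times\mathbb{Z}/2\mathbb{Z}$ rather than $\mathbb{Z}/4\mathbb{Z}$, a ramification computation at $2,p,q,s$ shows that both $\mathbb{Q}(\sqrt p,\sqrt q,\sqrt s)/F$ and $F(\sqrt{-1})/F$ are unramified, so $\mathbb{Q}(\sqrt{-1},\sqrt p,\sqrt q,\sqrt s)/F$ is an unramified $(\mathbb{Z}/2\mathbb{Z})^2$-extension; since $h_2(F)=4$ it must be the Hilbert $2$-class field $F^{(1)}$, which is therefore the genus field of $F$ and is elementary abelian of degree $16$ over $\mathbb{Q}$.

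Next I would propagate this along $F_\infty/F$, where $F_n=\mathbb{Q}_n(\sqrt{pq},\sqrt{ps})$ with $\mathbb{Q}_n$ the $n$-th layer of the cyclotomic $\mathbb{Z}_2$-extension of $\mathbb{Q}$ (so $F_1=\mathbb{Q}(\sqrt2,\sqrt{pq},\sqrt{ps})$ is multiquadratic of degree $8$, and $h_2(\mathbb{Q}_n)=1$). Each of the three quadratic subfields of $F$ has $\lambda=\mu=0$ by the cited vanishing criteria for the Iwasawa $\lambda$-invariant of quadratic fields with few ramified primes, so $h_2(\mathbb{Q}_n(\sqrt{pq}))$, $h_2(\mathbb{Q}_n(\sqrt{ps}))$ and $h_2(\mathbb{Q}_n(\sqrt{qs}))$ are constant in $n$. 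One can then finish in either of two ways: re-run Kuroda's formula (or Chevalley's ambiguous class number formula) at each layer $F_n/\mathbb{Q}_n$, keeping track of the unit index $q(F_n)$; or compute $\mathbf{C}l_2(F_1)\simeq(\mathbb{Z}/2\mathbb{Z})^2$ by a multiquadratic class number formula and then invoke Fukuda's stabilization theorem, after determining the decomposition of $2$ in $F$ (one or two primes above $2$, depending on $s\bmod8$, each becoming totally ramified in $F_\infty$) so that the appropriate form of that theorem applies. Either way one gets $\mathbf{C}l_2(F_n)\simeq(\mathbb{Z}/2\mathbb{Z})^2$ for all $n\ge0$; hence $X(F)=\varprojlim\mathbf{C}l_2(F_n)\simeq(\mathbb{Z}/2\mathbb{Z})^2$ and $\lambda=\mu=0$ for $F_\infty/F$, so Greenberg's conjecture holds.

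For the Galois group, class field theory gives $\mathrm{Gal}(\mathcal{L}(F_n)/F_n)^{\mathrm{ab}}\simeq\mathbf{C}l_2(F_n)\simeq(\mathbb{Z}/2\mathbb{Z})^2$, so $G:=\mathrm{Gal}(\mathcal{L}(F_n)/F_n)$ is a $2$-generated pro-$2$ group with Frattini quotient $(\mathbb{Z}/2\mathbb{Z})^2$, and $F_n^{(1)}$ is the genus field of $F_n$, describable explicitly by the same type of ramification computation as in the first step. Now $G$ is abelian if and only if the $2$-class field tower of $F_n$ stops at $F_n^{(1)}$, i.e. $\mathbf{C}l_2(F_n^{(1)})=1$; and if $G$ is non-abelian it suffices to prove $\#G\le 8$, since the only non-abelian $2$-groups of order $8$ with abelianization $(\mathbb{Z}/2\mathbb{Z})^2$ are $D_8$ and $Q_8$ — equivalently, to prove $\mathbf{C}l_2(F_n^{(1)})\simeq\mathbb{Z}/2\mathbb{Z}$ together with $\mathbf{C}l_2(F_n^{(2)})=1$, where $F_n^{(2)}=(F_n^{(1)})^{(1)}$. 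The key point is that $\mathbb{Q}(\sqrt{pqs})$ and $C(K)=\mathbb{Q}(\sqrt{2p},\sqrt{2s},\sqrt q)$ are precisely the subfields carrying the relevant $2$-class data: $\sqrt{pqs}\in\mathbb{Q}(\sqrt p,\sqrt q,\sqrt s)\subseteq F_n^{(1)}$ and $\mathbb{Q}(\sqrt{pqs})\subseteq C(K)$, while for $n\ge1$ one has $C(K)\subseteq\mathbb{Q}_n(\sqrt p,\sqrt q,\sqrt s)\subseteq F_n^{(1)}$ (the case $n=0$ being reduced to $n=1$ via norm maps and the isomorphism $\mathbf{C}l_2(F_0)\xrightarrow{\ \sim\ }\mathbf{C}l_2(F_1)$). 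A genus-theory and capitulation analysis of the unramified $2$-extensions of $F_n^{(1)}$, carried out relative to these two subfields, then shows that $\mathbf{C}l_2(F_n^{(1)})=1$ when $h_2(pqs)=h_2(C(K))=4$, so that $\#G=4$ and $G\simeq(\mathbb{Z}/2\mathbb{Z})^2$, and that $\mathbf{C}l_2(F_n^{(1)})\simeq\mathbb{Z}/2\mathbb{Z}$ with the tower terminating at $F_n^{(2)}$ when $h_2(pqs)=h_2(C(K))=8$, so that $\#G=8$ and $G\simeq D_8$ or $Q_8$.

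The two difficulties I anticipate are, first, uniformly controlling the unit indices in the (multi)quadratic class number formulas — for $F$, for $F_1$, and, on the first route, for every $F_n$ — which needs explicit bookkeeping with fundamental units of the quadratic and biquadratic subfields and a Kuroda-type recursion; and second, the heart of the Galois-group statement, proving that the two coarse invariants $h_2(pqs)$ and $h_2(C(K))$ (rather than finer genus data) genuinely decide whether the $2$-class field tower of $F_n$ has length $1$ or $2$ — most of the computation will live here. Deciding which of $D_8$ or $Q_8$ actually occurs would require still more information (for instance the norm of a suitable fundamental unit, or a Rédei symbol), which is why the theorem deliberately leaves that dichotomy open.
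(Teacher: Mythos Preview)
Your overall strategy is reasonable, but there is one concrete error and one structural gap relative to the paper's proof.

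\medskip
\noindent\textbf{The error.} Your claim that $F(\sqrt{-1})/F$ is unramified is false in the relevant sense: $F$ is totally real, so every archimedean place ramifies in $F(\sqrt{-1})/F$, and the Hilbert $2$-class field of a totally real field is totally real. Thus $\mathbb{Q}(\sqrt{-1},\sqrt p,\sqrt q,\sqrt s)$ is at best the \emph{narrow} genus field of $F$, not $F^{(1)}$. (At finite primes your computation is actually correct: since $pq\equiv -1\pmod 8$, locally $F_{\mathfrak p}\supseteq\mathbb{Q}_2(i)$ at every $\mathfrak p\mid 2$.) The paper establishes $r_2(\mathbf{C}l(F))=2$ differently, via the ambiguous class number formula for $F/\mathbb{Q}(\sqrt{qs})$ and, when $s\equiv7\pmod8$, by exhibiting a second \emph{totally real} unramified quadratic extension $\mathbb{Q}(\sqrt p,\sqrt s,\sqrt{\alpha_2^*})$ coming from a Hilbert genus field computation. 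Your argument can be repaired this way, but as written your later ``genus-theory analysis of $F_n^{(1)}$'' would be carried out on the wrong field.

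\medskip
\noindent\textbf{The structural gap.} Your treatment of the Galois-group dichotomy is only a sketch; the paper's mechanism is both more explicit and rests on an idea you do not use: the auxiliary biquadratic field
\[
K=\mathbb{Q}(\sqrt{2pq},\sqrt{ps}),
\]
chosen so that $F_1/K$ is an \emph{unramified} quadratic extension and $\mathbf{C}l_2(K)\simeq(\mathbb{Z}/2\mathbb{Z})^2$. Two things follow from the Kisilevsky classification of $2$-groups with abelianisation $(\mathbb{Z}/2\mathbb{Z})^2$. First, since $F_1$ is one of the three unramified quadratic extensions of $K$, its $2$-class group is either cyclic or of type $(2,2)$; a single rank estimate (again via the ambiguous class number formula) then forces $\mathbf{C}l_2(F_1)\simeq(\mathbb{Z}/2\mathbb{Z})^2$ without any degree-$8$ unit-index computation. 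Second, for each $k\in\{F,K\}$ exactly one unramified quadratic extension $C(k)$ has \emph{cyclic} $2$-class group, and $|G_k|=2\,h_2(C(k))$. The paper identifies $C(F)=\mathbb{Q}(\sqrt p,\sqrt q,\sqrt s)$ and computes $h_2(C(F))=\tfrac12 h_2(pqs)$; it then observes that $C_1(F)=C(F)(\sqrt2)$ coincides with $K^{(1)}$, so $h_2(C_1(F))=|G_K|/4=\tfrac12 h_2(C(K))$. Thus the two hypotheses $h_2(pqs)=4$ and $h_2(C(K))=4$ translate directly into $h_2(C(F))=h_2(C_1(F))=2$, whence $G_F\simeq G_{F_1}\simeq V$; Fukuda applied to the tower $(C_n(F))_n$ then gives $h_2(C_n(F))=2$ and $G_{F_n}\simeq V$ for all $n$. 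The case $h_2(pqs)=h_2(C(K))=8$ yields $h_2(C_n(F))=4$ and $|G_{F_n}|=8$ by the same route. This is the precise content of the ``capitulation analysis'' you gesture at, but the controlling object is $h_2(C_n(F))$ together with the auxiliary field $K$, not $\mathbf{C}l_2(F_n^{(1)})$ directly.
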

	 
	 Furthermore, we prove the following proposition (cf. Proposition \ref{proptriq}) which gives an example of real triquadratic fields of the form $C(F):=\QQ(\sqrt{p }, \sqrt {q},  \sqrt{s})$ such that 
	 $\G_{C_\infty(F)}=\mathrm{Gal}(\mathcal{L}(C_\infty(F))/C_\infty(F))$, the Galois group of the maximal unramified pro-$2$-extension $\mathcal{L}(C_\infty(F))$ of $C_\infty(F)$
	 is cyclic non trivial.
	 \begin{proposition}\label{mainprop}	Let  $q$,  $p$ and $s$ be  three distinct odd prime integers such that  $p\equiv   5\pmod 8$, $ q\equiv3\pmod 8$ and    $   s\equiv 3\pmod 4$ with  $\left(\frac{	p}{	q}\right)=\left(\frac{p}{s}\right)=1 $. Then, for all $n\geq 0$, the $2$-class group of the field
	 	$$C_n(F):= \QQ(\sqrt{p }, \sqrt {q},  \sqrt{s},2\cos( {2\pi}/{2^{n+2}}))$$ is cyclic non trivial. If, moreover,   $h_2(pqs)=h_2(C(K))=4$, then for all $n\geq 0$, the $2$-class group of  $C_n(F)$ is isomorphic to $\ZZ/2\ZZ$.
	 \end{proposition}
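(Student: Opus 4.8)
The plan is to reduce everything to Theorem~\ref{maintheorem} by descending along the quadratic extension $C_n(F)/F_n$. First I would verify that this extension is \emph{unramified}. Since $\sqrt{pq},\sqrt{ps}\in F\subseteq F_n$ we have $\sqrt{qs}\in F_n$, hence
$$C_n(F)=F_n(\sqrt{p})=F_n(\sqrt{q})=F_n(\sqrt{s})=F_n(\sqrt{pqs}),$$
a quadratic extension of $F_n$. As $p,q,s>0$ the field $C_n(F)$ is totally real, so the archimedean places are unramified; since $p\equiv 5\pmod 8$ the prime $2$ is inert in $\mathbb{Q}(\sqrt{p})/\mathbb{Q}$, so $\mathbb{Q}_2(\sqrt{p})/\mathbb{Q}_2$ is unramified and hence so is $C_n(F)=F_n(\sqrt{p})$ above $2$; finally, the inertia of $p$ in $\mathrm{Gal}(F/\mathbb{Q})\simeq(\mathbb{Z}/2\mathbb{Z})^2$ is the order-$2$ subgroup fixing $\mathbb{Q}(\sqrt{qs})$, and $F_n/F$ is unramified above $p$, so $p$ has even valuation at the primes of $F_n$ above it and $\sqrt{p}$ generates an unramified local extension there (the residue characteristic being $\neq 2$); the same argument with $\sqrt{q},\sqrt{s}$ handles the primes above $q$ and $s$. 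Equivalently, $C(F)=F(\sqrt{pqs})/F$ is unramified everywhere and $C_n(F)=F_n\cdot C(F)$ is obtained from it by base change.

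Next I would observe that $\mathcal{L}(C_n(F))=\mathcal{L}(F_n)$: one inclusion holds because $C_n(F)\subseteq\mathcal{L}(F_n)$ together with $\mathcal{L}(F_n)/F_n$ unramified force $\mathcal{L}(F_n)/C_n(F)$ to be an unramified pro-$2$-extension; the other because $\mathcal{L}(C_n(F))/C_n(F)$ and $C_n(F)/F_n$ are both unramified pro-$2$, so $\mathcal{L}(C_n(F))/F_n$ is too. Hence $G_{C_n(F)}:=\mathrm{Gal}(\mathcal{L}(C_n(F))/C_n(F))$ is the index-$2$ subgroup $\mathrm{Gal}(\mathcal{L}(F_n)/C_n(F))$ of $G_{F_n}:=\mathrm{Gal}(\mathcal{L}(F_n)/F_n)$. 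By Theorem~\ref{maintheorem}, $G_{F_n}$ is abelian of order $4$, or isomorphic to $Q_8$, or to $D_8$; in every case its index-$2$ subgroup $G_{C_n(F)}$ has order $2$ or $4$, hence is abelian, and therefore $\mathbf{C}l_2(C_n(F))\simeq G_{C_n(F)}$. It is non-trivial because $F_n^{(1)}$, the $2$-Hilbert class field of $F_n$, is an unramified extension of $C_n(F)$ of degree $\tfrac{1}{2}[F_n^{(1)}:F_n]=2$. If moreover $h_2(pqs)=h_2(C(K))=4$, then we are in the first case of Theorem~\ref{maintheorem}, $G_{F_n}\simeq\mathbb{Z}/2\mathbb{Z}\times\mathbb{Z}/2\mathbb{Z}$ has order $4$, so $G_{C_n(F)}$ has order $2$ and $\mathbf{C}l_2(C_n(F))\simeq\mathbb{Z}/2\mathbb{Z}$, proving the last assertion; and when $G_{F_n}\simeq Q_8$ every index-$2$ subgroup is cyclic of order $4$, so again $\mathbf{C}l_2(C_n(F))$ is cyclic and non-trivial.

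The remaining, and main, difficulty is the case $G_{F_n}\simeq D_8$, in which two of the three index-$2$ subgroups are isomorphic to $\mathbb{Z}/2\mathbb{Z}\times\mathbb{Z}/2\mathbb{Z}$ and only one is cyclic, so I must identify $C_n(F)$ with the fixed field of the cyclic one. I would use the capitulation criterion: for an unramified extension $K/F_n$ the natural map $\mathbf{C}l_2(F_n)\to\mathbf{C}l_2(K)$ is, by Artin's transfer theorem, the Verlagerung $G_{F_n}/[G_{F_n},G_{F_n}]\to\mathrm{Gal}(\mathcal{L}(F_n)/K)$, and a direct computation in $D_8$ shows that this map is identically zero exactly for the cyclic index-$2$ subgroup and has image of order $2$ for each of the other two. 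Hence it suffices to take explicit generators of $\mathbf{C}l_2(F_n)\simeq\mathbb{Z}/2\mathbb{Z}\times\mathbb{Z}/2\mathbb{Z}$ (available from the proof of Theorem~\ref{maintheorem}) and to show that both capitulate in $C_n(F)$; equivalently, to prove directly that the $2$-rank of $\mathbf{C}l_2(C_n(F))$ equals $1$, for instance through an ambiguous class number formula over a convenient subfield combined with the relation $C(F)=F(\sqrt{pqs})$. The technical heart, and the step I expect to be the real obstacle, is the unit-index computation inside the multiquadratic field $C_n(F)$, which is where the congruences $p\equiv 5\pmod 8$, $q\equiv 3\pmod 8$, $s\equiv 3\pmod 4$ and the conditions $\left(\frac{p}{q}\right)=\left(\frac{p}{s}\right)=1$ are really used. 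Once the $2$-rank is known to be $1$, together with $|\mathbf{C}l_2(C_n(F))|=4$ from the previous paragraph we obtain $\mathbf{C}l_2(C_n(F))\simeq\mathbb{Z}/4\mathbb{Z}$, which completes the proof.
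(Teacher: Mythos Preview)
Your reduction to Theorem~\ref{maintheorem} does not cover the first assertion of the proposition. The unconditional part of Theorem~\ref{maintheorem} only says $\mathbf{C}l_2(F_n)\simeq\ZZ/2\ZZ\times\ZZ/2\ZZ$; the identification of $G_{F_n}$ as $V$, $Q_8$ or $D_8$ is stated \emph{only} under the additional hypothesis $h_2(pqs)=h_2(C(K))=4$ or $=8$. The claim ``$\mathbf{C}l_2(C_n(F))$ is cyclic non-trivial'' carries no such hypothesis, so in general you only know $G_{F_n}\in\{V,\,Q_{2^m},\,D_{2^m},\,S_{2^m}\}$ with $m$ unbounded, and your case split simply does not apply. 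Even in the $D_8$ case that you do discuss, you leave the crucial step---showing that $C_n(F)$ corresponds to the \emph{cyclic} index-$2$ subgroup rather than one of the two Klein subgroups---unfinished, deferring it to a capitulation or unit-index computation that you do not carry out; and the same difficulty recurs for every $D_{2^m}$, $Q_{2^m}$ ($m>3$) and $S_{2^m}$. Your non-triviality argument and your treatment of the special case $h_2(pqs)=h_2(C(K))=4$ are fine, but the general cyclicity is not established.

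The paper bypasses $G_{F_n}$ altogether by working over the auxiliary field $K=\QQ(\sqrt{2pq},\sqrt{ps})$ instead of $F$. Since $\mathbf{C}l_2(K)\simeq\ZZ/2\ZZ\times\ZZ/2\ZZ$ (Lemma~\ref{cn}) and $C_1(F)=\QQ(\sqrt p,\sqrt q,\sqrt s,\sqrt 2)$ is an unramified abelian degree-$4$ extension of $K$, one has $C_1(F)=K^{(1)}$; as the commutator subgroup of $\mathrm{Gal}(\mathcal{L}(K)/K)$ is always cyclic when the abelianization is of type $(2,2)$, it follows at once that $\mathbf{C}l_2(C_1(F))$ is cyclic. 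Lemma~\ref{cn12hpqs} already gives $\mathbf{C}l_2(C_0(F))$ cyclic non-trivial, and Fukuda's theorem (Lemma~\ref{lm fukuda}) then forces $r_2(\mathbf{C}l_2(C_n(F)))\le 1$ for all $n$, with non-triviality coming from surjectivity of the norm in the totally ramified tower. The trick you are missing is precisely this passage through $K$: it replaces the delicate problem of locating $C_n(F)$ among the three unramified quadratic extensions of $F_n$ by the trivial observation that $C_1(F)$ is the \emph{full} Hilbert $2$-class field of $K$.
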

	 
	 In the end of this paper, we give some numeral examples illustrating these results.
	 
	 \section{\bf Preliminaries}\label{sec2}
	 
	 Let us start by recalling some facts from class field theory that will be very useful for our proofs.  Let $k$ be  a number field and  $\mathbf{C}l_2(k)$ be its $2$-class group. Let  $k^{(1)}$  be the Hilbert $2$-class field of $k$, that is  the maximal unramified  abelian  extension
	 of $k$ whose degree over $k$ is a $2$-power. Put $k^{(0)} = k$ and let $k^{(i)}$ denote the Hilbert $2$-class field of $k^{(i-1)}$
	 for any integer $i\geq 1$. Then the sequence of fields
	 $$k=k^{(0)} \subset k^{(1)} \subset  k^{(2)}  \subset \cdots\subset k^{(i)} \cdots \subset  \bigcup_{i\geq 0} k^{(i)}=\mathcal{L}(k)$$
	 is called   the $2$-class field tower of $k$. If for all $i\geq1$,  $k^{(i)}\neq k^{(i-1)}$, the tower is said to be infinite, otherwise the tower is said to be  finite, and the minimal integer $i$ satisfying the condition $k^{(i)}= k^{(i-1)}$ is called the length of the tower. The field $\mathcal{L}(k)$ is called the maximal unramified pro-$2$-extension  of $k$ and for $k_\infty $ the cyclotomic $\ZZ_2$-extension of $k$, the group $\G_{k_\infty}=\mathrm{Gal}(\mathcal{L}(k_\infty)/k_\infty)$ is isomorphic to the inverse limit $\varprojlim \mathrm{Gal}(\mathcal{L}(k_n)/k_n)$ with respect to the restriction map.
	 
	 One of the most important and difficult problems in algebraic number theory is to decide whether or not a $2$-class field tower of a number field  is finite. Furthermore, the study of the structure of the Galois group of the tower is an open problem. Assume that $\mathbf{C}l_2(k)$ is isomorphic to  $\ZZ/2\ZZ \times \ZZ/2\ZZ$. In this case, the Hilbert $2$-class field tower of $k$ terminates in at most two steps (cf. \cite{Ki76}). In this  case, it is well-known that  $\mathrm{G}_k=\mathrm{Gal}(\mathcal{L}(k)/k)$ is isomorphic to one of the following $2$-groups
	 $V=\ZZ/2\ZZ \times \ZZ/2\ZZ$,  $Q_{2^m}$,   $D_{2^m}$,  and $S_{2^m}$ namely the Klein four group, the quaternion,   dihedral and semidihedral groups respectively,  of order $2^m$,  where $m\geq3$ and  $m\geq4$ for $S_{2^m}$. 
	 Let $x$ and $y$ be such that $\mathrm{G}_k= \langle x,y\rangle$.
	 The  commutator subgroup $\G_k'$ of $\G_k$  is always cyclic and $\G_k'= \langle x^2\rangle$. The group $\G_k$ possesses exactly three subgroups of index $2$ which are:
	 \begin{eqnarray*}
	 	H_1 = \langle x^2,  xy\rangle,\quad H_2 = \langle x^2,  y\rangle,\quad H_3 = \langle x\rangle. 
	 \end{eqnarray*}
	 Note also that for the two cases $Q_8$ and $V$, each $H_i$ is cyclic. For the case $D_m$, with $m>3$,  $H_2$ and $H_1$ are also dihedral. For $Q_{2^m}$, with $m>3$,  $H_2$ and $H_1$ are quaternion. Finally for $S_{2^m}$,
	 $H_2$ is dihedral whereas $H_1$ is quaternion. Furthermore,  if $\G_k$ is isomorphic to $V$ (resp.  $Q_{8}$),  then the subgroups $H_i$ are cyclic of order $2$ (resp. $4$).\label{grps props}
	 If $\G_k$ is isomorphic to $ Q_{2^m}$, with $m>3$,  $ D_{2^m}$, with $m>3$ or $ S_m$,  then   $H_3$ is cyclic and $H_i/H_i'$ is of type $(2, 2)$ for $i\in \{1,  2\}$,  where $H_i'$ is the commutator subgroup of $H_i$.

	 Let  $A(k)$, $B(k)$ and $C(k)$ be the subfields of $k^{(2)}$ fixed by $H_1$,  $H_2$ and $H_3$ receptively.
	 If $k^{(2)}\not=k^{(1)}$,  $\langle x^4\rangle$ is the unique subgroup of $\G_k'$ of index $2$.
	 Let  $L$ ($L$ is defined only if $k^{(2)}\not=k^{(1)}$)  be
	 the subfield of $k^{(2)}$ fixed by $\langle x^4\rangle$. Then $A(k)$, $B(k)$ and $C(k)$ are the three quadratic subextensions of $k^{(1)}/k$ and $L$ is the unique subfield of $k^{(2)}$ such that $L/k$ is a nonabelian Galois extension of degree $8$. For more details we refer the reader to \cite{acz, Ki76}. We  draw the following useful remarks.

	 \begin{remark}\label{rmk 1 preliminaries} 
	 	The $2$-class group of  $C(k)$ is cyclic.
	 \end{remark}
	 \begin{remark}\label{rem2}  	 
	 	The $2$-class groups of the three unramified quadratic  extensions of $k$ are cyclic if and only if  $k^{(1)}=k^{(2)}$ or  $k^{(1)}\not=k^{(2)}$ and $\G_k\simeq Q_{8}$. In the other cases the $2$-class group of only one unramified quadratic  extension is cyclic and the others are of type $(2,  2)$.
	 \end{remark}
	 
	 So the situation of the Hilbert $2$-class field tower  of $k$ and its three quadratic unramified extensions can be  schematized according to the two cases:
	 
	 \begin{enumerate}[$\bullet $]
	 	\item  If  $h_2(C(k))= 2$, we have:
	 	\begin{figure}[H]	$$
	 		\begin{tikzpicture} [scale=1.2]
	 			\node (k)  at (0,  0) {$k$};
	 			\node (K)  at (-1,  1) {$A(k)$};
	 			\node (F)  at (1,  1) {$B(k)$};
	 			\node (L)  at (0,  1) {$C(k)$};
	 			\node (L*)  at (0,  2) {$A(k)^{(1)}=B(k)^{(1)}=  C(k)^{(1)}=k^{(1)}=k^{(2)}$};
	 			\draw (k) --(K)  node[scale=0.4,  midway,  below right]{};
	 			\draw (k) --(F)  node[scale=0.4,  midway,  below right]{};
	 			\draw (k) --(L)  node[scale=0.4,  midway,  below right]{\Large \bf2};
	 			\draw (k) --(L)  node[scale=0.4,  midway,  below right]{};
	 			\draw (L) --(L*)  node[scale=0.4,  midway,  below right]{\Large \bf2};
	 			\draw (K) --(L*)  node[scale=0.4,  midway,  below right]{};
	 			\draw (F) --(L*)  node[scale=0.4,  midway,  below right]{};
	 		\end{tikzpicture}    		 $$
	 		\caption{The case $h_2(C(k))= 2$}\label{fig1}\end{figure}
	 	\item If $h_2(C(k))\geq 4$, we have:
	 	\begin{figure}[H] $$	\begin{tikzpicture} [scale=1.2]
	 			\node (k)  at (0,  0) {$k$};
	 			\node (K)  at (-1,  1) {$A(k)$};
	 			\node (F)  at (1,  1) {$B(k)$};
	 			\node (L)  at (0,  1) {$C(k)$};
	 			\node (L*)  at (0,  2) {$k^{(1)}$};
	 			\node (Lpq1)  at (0,  3) {$A(k)^{(1)}=B(k)^{(1)}$};
	 			\node (Lpq1*1)  at (0,  4.75) {$A(k)^{(2)}=B(k)^{(2)}=C(k)^{(1)}=k^{(2)}$};
	 			\draw (k) --(K)  node[scale=0.4,  midway,  below right]{};
	 			\draw (k) --(F)  node[scale=0.4,  midway,  below right]{};
	 			\draw (k) --(L)  node[scale=0.4,  midway,  below right]{\Large \bf2};
	 			\draw (k) --(L)  node[scale=0.4,  midway,  below right]{};
	 			\draw (L) --(L*)  node[scale=0.4,  midway,  below right]{\Large \bf2};
	 			\draw (K) --(L*)  node[scale=0.4,  midway,  below right]{};
	 			\draw (F) --(L*)  node[scale=0.4,  midway,  below right]{};
	 			\draw (L*) --(Lpq1)  node[scale=0.4,  midway,  below right]{\Large \bf \hspace{-0.2cm}\;\;\;2};
	 			\draw (Lpq1) --(Lpq1*1)  node[scale=0.4,  midway,  below right]{\Large \bf$\frac{ h_2( C(k) )}{4}$};
	 		\end{tikzpicture} $$	
	 		\caption{The case  $h_2(C(k))\geq 4$}\label{fig2}\end{figure}
	 \end{enumerate}


	 \noindent\textbf{Notations:}  Keep the above assumptions and notations. The $n$th  layer  of the cyclotomic $\mathbb{Z}_2$-extension of the fields $A(k)$, $B(k)$ and $C(k)$ will be denoted $A_n(k)$, $B_n(k)$ and $C_n(k)$ respectively.
	 
	 \bigskip
	 Let us now recall some other   useful lemmas. The following result is called the ambiguous class number formula.
	 
	 \begin{lemma}[\cite{Qinred}, Lemma 2.4]\label{AmbiguousClassNumberFormula} Let $k/k'$ be a quadratic extension of number fields. If the class number of $k'$ is odd, then the  rank of the  $2$-class group of $k$ is given by
	 	$$\r2({\mathbf{C}l_2(k)})=t-1-e,$$
	 	with  $t$ is the number of  ramified primes (finite or infinite) in the extension  $k/k'$ and $e$ is  defined by   $2^{e}=[E_{k'}:E_{k'} \cap N_{k/k'}(k^*)]$.
	 \end{lemma}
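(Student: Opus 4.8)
The plan is to obtain the stated $2$-rank formula as a consequence of Chevalley's ambiguous class number formula, after using the oddness of the class number of $k'$ to pin down the Galois action on $\mathbf{C}l_2(k)$. Write $G=\mathrm{Gal}(k/k')=\langle\sigma\rangle$. Chevalley's formula for the (cyclic, here quadratic) extension $k/k'$ computes the number of ambiguous ideal classes as
\[
\#\mathbf{C}l(k)^{G}=\frac{h(k')\,\prod_{v}e_{v}}{[k:k']\,[E_{k'}:E_{k'}\cap N_{k/k'}(k^{*})]},
\]
where $v$ ranges over all places of $k'$ (finite and infinite) and $e_{v}$ is the ramification index of $v$ in $k/k'$. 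Since $[k:k']=2$, and $e_{v}=2$ at exactly the $t$ ramified places while $e_{v}=1$ elsewhere, we have $\prod_{v}e_{v}=2^{t}$, so the formula reads $\#\mathbf{C}l(k)^{G}=h(k')\,2^{\,t-1-e}$. Proving this identity is the technical core: it rests on the standard analysis involving the exact sequences relating $E_{k}$, the group of principal ideals and the full ideal group of $k$, together with the local–global evaluation of the index $[E_{k'}:E_{k'}\cap N_{k/k'}(k^{*})]$ via the Hasse norm theorem. I would either cite it from the literature (as the statement does) or reproduce this well-known chain of reductions.

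Next I would invoke the hypothesis that the class number of $k'$ is odd, i.e.\ $\mathbf{C}l_2(k')=1$. Let $j_{k/k'}\colon\mathbf{C}l_2(k')\to\mathbf{C}l_2(k)$ be extension of ideals and $N_{k/k'}\colon\mathbf{C}l_2(k)\to\mathbf{C}l_2(k')$ the norm. From the identity $j_{k/k'}\circ N_{k/k'}=1+\sigma$ on $\mathbf{C}l_2(k)$ and $\mathbf{C}l_2(k')=1$ one gets $(1+\sigma)c=1$ for every $c\in\mathbf{C}l_2(k)$, so $\sigma$ acts on $\mathbf{C}l_2(k)$ by $c\mapsto c^{-1}$. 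Hence
\[
\mathbf{C}l_2(k)^{G}=\{c\in\mathbf{C}l_2(k):\sigma(c)=c\}=\{c\in\mathbf{C}l_2(k):c^{2}=1\}=\mathbf{C}l_2(k)[2],
\]
and since $\mathbf{C}l_2(k)$ is a finite abelian $2$-group, $\#\mathbf{C}l_2(k)[2]=2^{\,r_{2}(\mathbf{C}l(k))}$.

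Finally I would combine the two computations. The $G$-action respects the splitting of $\mathbf{C}l(k)$ into its $2$-part $\mathbf{C}l_2(k)$ and its odd part, so the $2$-part of $\#\mathbf{C}l(k)^{G}$ equals $\#\mathbf{C}l_2(k)^{G}$. Taking $2$-parts in $\#\mathbf{C}l(k)^{G}=h(k')\,2^{\,t-1-e}$ with $h(k')$ odd gives $\#\mathbf{C}l_2(k)^{G}=2^{\,t-1-e}$ (in particular $t-1-e\ge 0$); comparing with the previous paragraph yields $2^{\,r_{2}(\mathbf{C}l(k))}=2^{\,t-1-e}$, whence $r_{2}(\mathbf{C}l(k))=t-1-e$. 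The only genuinely hard ingredient is Chevalley's formula of the first step; once $h(k')$ is odd, the passage to $2$-ranks in the remaining steps is routine.
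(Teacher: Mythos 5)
Your argument is correct: Chevalley's ambiguous class number formula gives $\#\mathbf{C}l(k)^{G}=h(k')\,2^{t-1-e}$, the triviality of $\mathbf{C}l_2(k')$ forces $\sigma$ to act by inversion on $\mathbf{C}l_2(k)$ so that the ambiguous $2$-classes are exactly the $2$-torsion, and comparing $2$-parts yields $r_2(\mathbf{C}l(k))=t-1-e$. The paper does not prove this lemma itself but cites it from Yue's work, and your derivation is precisely the standard argument behind that cited result.
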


	 The following lemma is a particular case of Fukuda's Theorem  \cite{fukuda}.

	 \begin{lemma}[\cite{fukuda}]\label{lm fukuda}
	 	Let $k_\infty/k$ be a $\mathbb{Z}_2$-extension and $n_0$  an integer such that any prime of $k_\infty$ which is ramified in $k_\infty/k$ is totally ramified in $k_\infty/k_{n_0}$.
	 	\begin{enumerate}[\rm $1)$]
	 		\item If there exists an integer $n\geq n_0$ such that   $h_2(k_n)=h_2(k_{n+1})$, then $h_2(k_n)=h_2(k_{m})$ for all $m\geq n$.
	 		\item If there exists an integer $n\geq n_0$ such that $\r2( \mathbf{C}l_2(k_n))= \r2(\mathbf{C}l_2(k_{n+1}))$, then
	 		$\r2(\mathbf{C}l_2(k_{m}))= \r2(\mathbf{C}l_2(k_{n}))$ for all $m\geq n$.
	 	\end{enumerate}
	 \end{lemma}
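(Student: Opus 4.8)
The plan is to derive both statements from one application of Nakayama's lemma to the Iwasawa module, performed first over $\Lambda=\mathbb{Z}_2[[T]]$ and then over $\mathbb{F}_2[[T]]$. First I would replace the base field by $k_{n_0}$, so that one may assume $n_0=0$ and that every prime of $k_\infty$ ramified in $k_\infty/k$ is totally ramified in $k_\infty/k$. Then I would fix a topological generator $\gamma$ of $\Gamma=\mathrm{Gal}(k_\infty/k)$, identify $\Lambda=\mathbb{Z}_2[[\Gamma]]$ with $\mathbb{Z}_2[[T]]$ via $\gamma\mapsto 1+T$, set $\omega_n=(1+T)^{2^n}-1$, $\nu_{n,m}=\omega_n/\omega_m$ for $n\geq m$, and $\mathfrak{m}=(2,T)$, and write $X=\varprojlim\mathbf{C}l_2(k_n)$.

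The structural input I would invoke is Iwasawa's classical description of the layers of a totally ramified $\mathbb{Z}_2$-extension: $X$ is a finitely generated $\Lambda$-module, and there is a $\Lambda$-submodule $Z\subseteq X$, the same for every $n$, with $\mathbf{C}l_2(k_n)\cong X/\nu_{n,0}Z$ for all $n\geq 0$. For completeness I would recall its proof. Writing $M_\infty$ for the maximal unramified abelian pro-$2$-extension of $k_\infty$, one has $X=\mathrm{Gal}(M_\infty/k_\infty)$; choosing, for each prime of $M_\infty$ above a ramified prime, a generator $\tilde\gamma_i$ of its inertia group in $\mathrm{Gal}(M_\infty/k)$ restricting to $\gamma$ on $k_\infty$ (possible by total ramification) gives $\mathrm{Gal}(M_\infty/k)=X\rtimes\overline{\langle\tilde\gamma_1\rangle}$. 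A short computation with the $\Gamma$-action then shows that the normal closure of the subgroup generated by the $\tilde\gamma_i^{2^n}$ meets $X$ exactly in $\nu_{n,0}\bigl(TX+\sum_{i\geq 2}\Lambda(\tilde\gamma_i\tilde\gamma_1^{-1})\bigr)$, and its fixed field is the Hilbert $2$-class field of $k_n$; this yields the isomorphism with $Z:=TX+\sum_{i\geq 2}\Lambda(\tilde\gamma_i\tilde\gamma_1^{-1})$.

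Granting this, part $1)$ becomes essentially formal. Put $W_n=\nu_{n,0}Z$; since $\mathbf{C}l_2(k_n)$ is finite, $W_n$ is a $\Lambda$-submodule of $X$ of finite index, $W_{n+1}=\nu_{n+1,n}W_n\subseteq W_n$, and $h_2(k_{n+1})/h_2(k_n)=[W_n:W_{n+1}]$; moreover $\nu_{n+1,n}=1+(1+T)^{2^n}\in\mathfrak{m}$. Hence if $h_2(k_n)=h_2(k_{n+1})$ for some $n\geq 0$, then $W_n=\nu_{n+1,n}W_n\subseteq\mathfrak{m}W_n$, so $W_n=\mathfrak{m}W_n$, and Nakayama over the Noetherian local ring $\Lambda$ forces $W_n=0$; thus $X\cong\mathbf{C}l_2(k_n)$ is finite, $W_m=\nu_{m,n}W_n=0$ for all $m\geq n$, and $h_2(k_m)=h_2(k_n)$. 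For part $2)$ I would run the same argument modulo $2$: over the local ring $\overline{\Lambda}=\mathbb{F}_2[[T]]$ one has $(1+T)^{2^n}=1+T^{2^n}$, hence $\overline{\nu_{n,0}}=T^{2^n-1}$, so with $\overline{X}=X/2X$ and $\overline{Z}$ the image of $Z$ we get $r_2(\mathbf{C}l(k_n))=\dim_{\mathbb{F}_2}\overline{X}/T^{2^n-1}\overline{Z}$. Setting $V_n=T^{2^n-1}\overline{Z}$ one has $V_{n+1}=T^{2^n}V_n\subseteq TV_n\subseteq V_n$; if $r_2(\mathbf{C}l(k_n))=r_2(\mathbf{C}l(k_{n+1}))$ then $V_n=V_{n+1}$, whence $V_n=TV_n$, whence $V_n=0$ by Nakayama over $\overline{\Lambda}$, so $V_m=0$ and $r_2(\mathbf{C}l(k_m))=\dim_{\mathbb{F}_2}\overline{X}=r_2(\mathbf{C}l(k_n))$ for all $m\geq n$.

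The only genuine obstacle is the structural statement $\mathbf{C}l_2(k_n)\cong X/\nu_{n,0}Z$ with a uniform $Z$ together with the finite generation of $X$ over $\Lambda$; these are standard facts of Iwasawa theory (and, the lemma being due to Fukuda, one may simply cite \cite{fukuda}). Everything after that is the two–line Nakayama manipulation above, which is also why the two parts come out formally parallel, part $2)$ being the reduction of part $1)$ modulo $2$.
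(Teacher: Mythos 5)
The paper does not prove this lemma at all; it is imported verbatim from Fukuda's paper, so there is no internal proof to compare against. Your argument is correct and is essentially Fukuda's own: the structural input $\mathbf{C}l_2(k_n)\cong X/\nu_{n,n_0}Y_{n_0}$ for a uniform submodule $Y_{n_0}$ (valid once all ramified primes are totally ramified), followed by Nakayama over $\Lambda=\mathbb{Z}_2[[T]]$ using $\nu_{n+1,n}\in(2,T)$ for the class-number statement, and over $\mathbb{F}_2[[T]]$ using $\overline{\nu_{n,n_0}}=T^{2^n-2^{n_0}}$ for the rank statement. The finiteness/finite-generation hypotheses needed for Nakayama are correctly accounted for, so the proof is complete.
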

	 
	 
	 \medskip
	 
	 The   following class number formula for   multiquadratic number fields  is usually attributed to Kuroda \cite{Ku-50} or Wada \cite{Wa-66}, but it goes back to Herglotz \cite{He-22}.
		 \medskip
	 \begin{lemma}[\cite{Ku-50}]\label{wada's f.}
	 	Let $k$ be a multiquadratic number field of degree $2^n$, with  $n\geq 2$ is an integer,  and $k_i$ the $s=2^n-1$ quadratic subfields of $k$. Then
	 	$$h(k)=\frac{1}{2^v}q(k)\prod_{i=1}^{s}h(k_i),$$
	 	with  $ q(k):=[E_k: \prod_{i=1}^{s}E_{k_i}]$ and   $$     v=\left\{ \begin{array}{cl}
	 		n(2^{n-1}-1); &\text{ if } k \text{ is real, }\\
	 		(n-1)(2^{n-2}-1)+2^{n-1}-1 & \text{ if } k \text{ is imaginary.}
	 	\end{array}\right.$$
	 \end{lemma}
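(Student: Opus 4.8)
The plan is to establish the formula analytically, by combining the factorization of the Dedekind zeta function of the abelian field $k$ with the analytic class number formula; the exponent $v$ then drops out of a comparison of regulators with the unit index $q(k)$. Write $G=\mathrm{Gal}(k/\QQ)\cong(\ZZ/2\ZZ)^n$ and let $\widehat G=\{\chi_0,\chi_1,\dots,\chi_m\}$, $m=2^n-1$, be its character group, where $\chi_0$ is trivial and $\chi_i$ ($i\ge 1$) is the quadratic character cutting out $k_i$. Artin's formalism gives $\zeta_k(s)=\prod_{j=0}^m L(s,\chi_j)$ and $\zeta_{k_i}(s)=\zeta(s)L(s,\chi_i)$, whence the Brauer relation $\zeta_k(s)\,\zeta(s)^{m-1}=\prod_{i=1}^m\zeta_{k_i}(s)$. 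Both sides have a pole of order $m$ at $s=1$; comparing leading Laurent coefficients (using $\mathrm{Res}_{s=1}\zeta(s)=1$), substituting the analytic class number formula
$$\mathrm{Res}_{s=1}\zeta_K(s)=\frac{2^{r_1(K)}(2\pi)^{r_2(K)}\,\mathrm{Reg}_K\,h_K}{w_K\sqrt{|d_K|}}$$
for $K=k$ and $K=k_i$, and using the conductor--discriminant identity $|d_k|=\prod_{i=1}^m|d_{k_i}|$ to cancel the discriminant terms, reduces the relation to
$$\frac{2^{r_1(k)}(2\pi)^{r_2(k)}}{w_k}\,\mathrm{Reg}_k\,h_k=\prod_{i=1}^m\frac{2^{r_1(k_i)}(2\pi)^{r_2(k_i)}}{w_{k_i}}\,\mathrm{Reg}_{k_i}\,h_{k_i}.$$

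It then remains to evaluate the archimedean and regulator contributions, and it is exactly here that the dichotomy real versus imaginary, hence the two shapes of $v$, enters. Since $k$ is abelian it is totally real or totally imaginary. Assume first $k$ totally real, so that each $k_i$ is real quadratic and the powers of $2$, of $2\pi$ and the $w$'s cancel, leaving $\mathrm{Reg}_k\,h_k=\prod_i\mathrm{Reg}_{k_i}h_{k_i}$. As $E_k\otimes\QQ\cong\bigoplus_{i=1}^m\QQ_{\chi_i}$ (the non-trivial part of $\QQ[G]$) and each $E_{k_i}\otimes\QQ$ is the $\chi_i$-line, $\prod_iE_{k_i}$ has full rank $m$ in $E_k$ and $q(k)=[E_k:\prod_iE_{k_i}]$ is finite. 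Picking fundamental units $\varepsilon_i$ of $k_i$, the $m\times m$ matrix of their logarithmic embeddings in $k$ equals $\mathrm{diag}(\log|\varepsilon_i|)\cdot C$, where $C$ is the character table of $(\ZZ/2\ZZ)^n$ with the row of $\chi_0$ and one column removed. From $T^2=2^nI$ for the full table $T$ one gets $|\det T|=2^{n2^{n-1}}$ and hence, via $\det C/\det T=(T^{-1})_{g_0,\chi_0}=2^{-n}$, that $|\det C|=2^{n(2^{n-1}-1)}$. Since $\mu_k=\{\pm1\}\subset E_{k_i}$, the index of the lattice generated by the $\varepsilon_i$ in $E_k/\mu_k$ is exactly $q(k)$, so $\prod_i\mathrm{Reg}_{k_i}=q(k)\,2^{-n(2^{n-1}-1)}\mathrm{Reg}_k$, and therefore $h_k=2^{-v}q(k)\prod_ih_{k_i}$ with $v=n(2^{n-1}-1)$.

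Assume now $k$ totally imaginary, and let $k^+$ be its maximal real subfield, of degree $2^{n-1}$; its $2^{n-1}-1$ quadratic subfields are precisely the real quadratic subfields of $k$, the other $2^{n-1}$ being imaginary quadratic, hence of unit rank $0$. The same bookkeeping --- now with $r_1(k)=0$, $r_2(k)=2^{n-1}$, separating the real from the imaginary $k_i$ and absorbing the finitely many exceptional roots of unity into the index --- again cancels the surplus powers of $2$ and leaves $\mathrm{Reg}_k\,h_k=\prod_i\mathrm{Reg}_{k_i}h_{k_i}$. All the units in play come from $k^+$, so one applies the already proved real case to $k^+$ and then relates the regulators through $\mathrm{Reg}_k=2^{2^{n-1}-1}Q_k^{-1}\mathrm{Reg}_{k^+}$, where the factor $2^{2^{n-1}-1}$ comes from the weight $2$ carried by the complex places of the CM field $k$ in the logarithmic embedding and $Q_k=[E_k:\mu_kE_{k^+}]\in\{1,2\}$ is the Hasse unit index; using moreover $q(k)=Q_k\,q(k^+)$, collecting the powers of $2$ gives $v=(n-1)(2^{n-2}-1)+2^{n-1}-1$.

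The step I expect to be the main obstacle is this last one in each case: the precise dictionary between the product $\prod_i\mathrm{Reg}_{k_i}$, the regulator $\mathrm{Reg}_k$, and the unit index $q(k)$. In the totally real case it amounts to the determinant computation for the character table of $(\ZZ/2\ZZ)^n$; in the imaginary case it further involves the Hasse index, the complex-place weights, and a careful treatment of the exceptional fields $\QQ(i)$ and $\QQ(\sqrt{-3})$ --- since pinning down the exact power of $2$ is the whole point of the statement. An alternative, more elementary but considerably more laborious, is an induction on $n$ reducing the $(\ZZ/2\ZZ)^n$-case to the $(\ZZ/2\ZZ)^{n-1}$-case by splitting off one quadratic subfield of $k$, with the classical biquadratic instance --- itself proved by genus theory through the ambiguous class number formula of Lemma~\ref{AmbiguousClassNumberFormula} --- serving as the base.
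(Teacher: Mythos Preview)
The paper does not prove this lemma: it is quoted verbatim as a classical result, with attribution to Kuroda~\cite{Ku-50}, Wada~\cite{Wa-66} and Herglotz~\cite{He-22}, and no argument is supplied. So there is no ``paper's own proof'' to compare against.

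That said, your analytic approach is the standard one and is essentially how the result is proved in the literature. The Brauer relation $\zeta_k(s)\,\zeta(s)^{m-1}=\prod_i\zeta_{k_i}(s)$, the conductor--discriminant cancellation, and the reduction to a regulator identity are all correct. In the totally real case your determinant computation is right: the full character table $T$ of $(\ZZ/2\ZZ)^n$ satisfies $TT^t=2^nI$, hence $|\det T|=2^{n2^{n-1}}$, and the cofactor argument gives $|\det C|=2^{n(2^{n-1}-1)}$; combined with the lattice-index interpretation of $q(k)$ this yields the stated $v$. In the CM case your outline via $k^+$, the Hasse index $Q_k$, and the factor $2^{2^{n-1}-1}$ from the complex-place weights is the right skeleton, and the identity $q(k)=Q_k\,q(k^+)$ does hold in the generic situation $\mu_k=\{\pm1\}$.

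The only genuine gap is the one you yourself flag: when $\QQ(i)$ or $\QQ(\sqrt{-3})$ occurs among the $k_i$ (equivalently, when $w_k>2$), the bookkeeping of $w_k$ against $\prod_i w_{k_i}$ and the relation $q(k)=Q_k\,q(k^+)$ both acquire correction factors that must be shown to cancel. This is routine but must actually be carried out case by case (e.g.\ $k\supset\QQ(i)$ but $k\not\supset\QQ(\sqrt{-3})$, etc.); ``absorbing into the index'' is not yet a proof. Once those finitely many cases are checked, your argument is complete.
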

	 
	 To use this lemma, we shall need the following   values  of $2$-class numbers of certain quadratic fields. 
	 \begin{remark}\label{clval}Let $p\equiv 5\pmod 8$ and $q\equiv s\equiv 3\pmod 4$ be three distinct prime numbers. We have:
	 	\begin{enumerate}[$\bullet$]
	 		\item $h_2(2)=h_2(p)=h_2(q)=h_2(2q)=h_2(qs)=1$ (cf. \cite[Corollary 18.4]{connor88}).
	 		
	 		\item  $h_2(pq)=h_2(ps)=h_2(2pq)=2$. If moreover  $s\equiv 3\pmod 8 $ or $q\equiv 3\pmod 8$, then $h_2(2sq)=2$ (cf. \cite[Corollary 19.7]{connor88}).
	 	\end{enumerate} 
	 \end{remark}   
	 
	 Now let us  recall the following  method given in    \cite{Wa-66}, that describes a fundamental system  of units of a  multiquadratic field $k_0$. Let  $\sigma_1$ and 
	 $\sigma_2$ be two distinct elements of order $2$ of the Galois group of $k_0/\mathbb{Q}$. Let $k_1$, $k_2$ and $k_3$ be the three subextensions of $k_0$ invariant by  $\sigma_1$,
	 $\sigma_2$ and $\sigma_3= \sigma_1\sigma_2$, respectively. Let $\varepsilon$ denote a unit of $k_0$. Then \label{algo wada}
	 $$\varepsilon^2=\varepsilon\varepsilon^{\sigma_1}  \varepsilon\varepsilon^{\sigma_2}(\varepsilon^{\sigma_1}\varepsilon^{\sigma_2})^{-1},$$
	 and we have, $\varepsilon\varepsilon^{\sigma_1}\in E_{k_1}$, $\varepsilon\varepsilon^{\sigma_2}\in E_{k_2}$  and $\varepsilon^{\sigma_1}\varepsilon^{\sigma_2}\in E_{k_3}$.
	 It follows that the unit group of $k_0$  
	 is generated by the elements of  $E_{k_1}$, $E_{k_2}$ and $E_{k_3}$, and the square roots of elements of   $E_{k_1}E_{k_2}E_{k_3}$ which are perfect squares in $k_0$.

	 \section{\bf The proof of the main theorem}
	 
	 The proof of our main theorem  relies on the following  lemmas and propositions.  	Let  $q$,  $p$ and $s$ be  three distinct prime integers such that  $p\equiv   5\pmod 8$, $ q\equiv3\pmod 8$ and    $   s\equiv 3\pmod 4$ with  $\left(\frac{	p}{	q}\right)=\left(\frac{p}{s}\right)=1 $.
	 Put   $F=\mathbb{Q}(\sqrt{pq}, \sqrt{ps} )$ and  $K=\QQ(\sqrt {2pq},  \sqrt{ps})$. Let   $\varepsilon_{d}$ (resp. $h_2(d)$) be the fundamental unit (resp. $2$-class number) of a real quadratic field $\mathbb{Q}(\sqrt{d})$.  We have:

	 \begin{lemma}\label{cn}  
	 	The $2$-class groups of $K$ and $F$ are isomorphic   to $\ZZ/2\ZZ\times \ZZ/2\ZZ$.
	 \end{lemma}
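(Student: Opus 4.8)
The plan is to compute, separately for $F$ and for $K$, the $2$-class number and the $2$-rank, and then to invoke the fact that a finite abelian $2$-group of order $4$ and $2$-rank $2$ is $\ZZ/2\ZZ\times\ZZ/2\ZZ$. Write $k_1=\QQ(\sqrt{pq})$, $k_2=\QQ(\sqrt{ps})$, $k_3=\QQ(\sqrt{qs})$ for the three quadratic subfields of $F$, and $k_1'=\QQ(\sqrt{2pq})$, $k_2'=k_2$, $k_3'=\QQ(\sqrt{2qs})$ for those of $K$. From the recalled values one already has $h_2(k_1)=h_2(k_1')=h_2(k_3')=2$ and $h_2(k_3)=1$, so the first point is to determine $h_2(k_2)=h_2(ps)$. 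Applying Lemma~\ref{AmbiguousClassNumberFormula} to $\QQ(\sqrt{ps})/\QQ$, the ramified primes are $2,p,s$ and $-1$ is not a local norm at $s$ because $s\equiv3\pmod4$ is ramified, so $e=1$ and $r_2(\mathbf{C}l(\QQ(\sqrt{ps})))=3-1-1=1$; hence $\mathbf{C}l_2(\QQ(\sqrt{ps}))$ is cyclic, and a computation with R\'edei's criterion gives $h_2(ps)=2$. Along the way one records that $\varepsilon_{pq},\varepsilon_{ps},\varepsilon_{qs},\varepsilon_{2pq},\varepsilon_{2qs}$ all have norm $+1$, since each of these real quadratic fields has a ramified prime $\equiv3\pmod4$.

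I would then compute $h_2(F)$ and $h_2(K)$ from Kuroda's class number formula (Lemma~\ref{wada's f.}) in the real biquadratic case $n=2$, $v=2$:
\[
h_2(F)=\tfrac14\,q_2(F)\,h_2(pq)\,h_2(ps)\,h_2(qs)=q_2(F),\qquad
h_2(K)=\tfrac14\,q_2(K)\,h_2(2pq)\,h_2(ps)\,h_2(2qs)=2\,q_2(K),
\]
where $q_2(\,\cdot\,)$ is the ($2$-power) unit index of the biquadratic field over the product of the unit groups of its three quadratic subfields. So it remains to prove $q_2(F)=4$ and $q_2(K)=2$, and this I would do through Wada's description of the units of a real biquadratic field recalled on page~\pageref{algo wada}: the unit group is generated by the unit groups of the three quadratic subfields together with the square roots of those products of $\varepsilon_{pq},\varepsilon_{ps},\varepsilon_{qs}$ (resp.\ $\varepsilon_{2pq},\varepsilon_{ps},\varepsilon_{2qs}$) that become squares in $F$ (resp.\ $K$). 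Because the relevant fundamental units all have norm $+1$, deciding which of these square roots actually lie in the degree-$4$ field is a finite computation with quadratic-residue and norm-residue symbols at $p$, $q$, $s$ and $2$, and the congruences $p\equiv5$, $q\equiv3\pmod8$, $s\equiv3\pmod4$ together with $\left(\frac{p}{q}\right)=\left(\frac{p}{s}\right)=1$ are precisely what make these symbols come out as required; the upshot is $h_2(F)=h_2(K)=4$.

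It remains to see that $\mathbf{C}l_2(F)$ and $\mathbf{C}l_2(K)$ have $2$-rank $2$ (being then automatically of exponent $2$, as their order is $4$). For $K$ this follows from genus theory directly: the field $\QQ(\sqrt2,\sqrt p,\sqrt q,\sqrt s)$ contains $K$ with index $4$, is a $(\ZZ/2\ZZ)^2$-extension of $K$, and is unramified over $K$ — one checks prime by prime that the ramification indices of $2,p,q,s$ in $\QQ(\sqrt2,\sqrt p,\sqrt q,\sqrt s)/\QQ$ are already attained in $K/\QQ$ (for $2$ this uses that $2$ is totally ramified in $K$ while being unramified in $\QQ(\sqrt p)$ and $\QQ(\sqrt{qs})$; the primes $p,q,s$ are tame) — so $r_2(\mathbf{C}l(K))\ge2$, hence $=2$ by $h_2(K)=4$, and $\mathbf{C}l_2(K)\simeq\ZZ/2\ZZ\times\ZZ/2\ZZ$. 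For $F$ the analogous extension $\QQ(\sqrt p,\sqrt q,\sqrt s)$ has degree only $2$ over $F$, so here I would instead apply Lemma~\ref{AmbiguousClassNumberFormula} to $F/k_3=F/\QQ(\sqrt{qs})$, whose base has odd class number: since $p$ splits and $q$ ramifies in $\QQ(\sqrt{qs})$, the primes of $\QQ(\sqrt{qs})$ ramified in $F$ are the two primes above $p$ together with the prime(s) above $2$ — a short case distinction on $s\bmod8$ — and the matching value of $e$ is obtained from a norm-residue computation, giving $r_2(\mathbf{C}l(F))=2$; with $h_2(F)=4$ this yields $\mathbf{C}l_2(F)\simeq\ZZ/2\ZZ\times\ZZ/2\ZZ$.

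The main obstacle is the unit-index computation, i.e.\ establishing $q_2(F)=4$ and $q_2(K)=2$: one must bring the fundamental units $\varepsilon_{pq},\varepsilon_{ps},\varepsilon_{qs}$ (and their primed analogues) to a normalized form and then decide, product by product, which of them become squares in the relevant biquadratic field. It is exactly at this step that all of the arithmetic hypotheses on $p,q,s$ are used, and where the delicate $2$-adic sign questions distinguishing $q_2=2$ from $q_2=4$ have to be settled; once the unit indices are in hand, the two applications of the ambiguous class number formula, Kuroda's formula, and the genus-theoretic construction of the unramified extensions are routine.
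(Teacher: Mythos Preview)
Your plan is essentially the paper's own proof: compute the unit indices $q(F)=4$ and $q(K)=2$ via Wada's method, feed these into Kuroda's formula to get $h_2(F)=h_2(K)=4$, and then establish $r_2\ge2$ for each field to conclude. The only variations are cosmetic: for $K$ the paper exhibits the two separate unramified quadratic extensions $F_1$ and $\QQ(\sqrt{2q},\sqrt s,\sqrt p)$ of $K$, whereas you package them together as the single $(\ZZ/2\ZZ)^2$-extension $\QQ(\sqrt2,\sqrt p,\sqrt q,\sqrt s)$; for $F$ both arguments use Lemma~\ref{AmbiguousClassNumberFormula} over $\QQ(\sqrt{qs})$ with a split on $s\bmod8$, but in the case $s\equiv7\pmod8$ the paper sidesteps the computation of $e$ by invoking a second unramified extension from \cite{BaeYue}, while you propose to compute $e$ directly.

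Where your sketch stops short of the paper is precisely at the point you flag yourself: the determination of $q(F)$ and $q(K)$. The paper does not argue abstractly with norm-residue symbols here but instead quotes explicit expressions such as $\sqrt{\varepsilon_{ps}}=c_1\sqrt p+c_2\sqrt s$, $\sqrt{\varepsilon_{pq}}=b_1\sqrt p+b_2\sqrt q$, $\sqrt{2\varepsilon_{2pq}}=a_1\sqrt p+a_2\sqrt{2q}$, etc., drawn from \cite{chemszekhniniazizilambdas,acztrends,ChemsUnits9,aztaous}. From these one reads off immediately which products of units become squares in $F$ or $K$, yielding $q(F)=4$ and $q(K)=2$ without any case analysis. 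If you intend to complete your version, this is where the work lies; otherwise your outline and the paper's proof coincide.
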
  
	 \begin{proof}$\:$\\
	 	\noindent\ding{229} Let us start by proving that $\mathbf{C}l_2(K)\simeq \ZZ/2\ZZ\times \ZZ/2\ZZ$.

	 	$\bullet$ Notice that if $s\equiv 7\pmod 8$ (resp. $s\equiv 3\pmod 8$), then according to \cite[Lemmas 5 and 7]{chemszekhniniazizilambdas} (resp. \cite[Lemmas 4]{acztrends}), we have 
	 	$\sqrt{\varepsilon_{2sq}}=\frac12(2y_1\sqrt{s} +y_2\sqrt{2q})$ (resp. $\sqrt{\varepsilon_{2sq}}=\frac12(y_1'\sqrt{2}  +2y_2'\sqrt{sq})$) for some integers $y_1$ and $y_2$ (resp. $y_1'$ and $y_2'$).


	 	$\bullet$ According to \cite[Lemmas 2.3]{ChemsUnits9}, $\sqrt{\varepsilon_{2pq}}= \frac12(a_1\sqrt{2p} +2a_2\sqrt{q})$, for some integers $a_1$ and $a_2$.
	 	

	 	$\bullet$ Since  $\left(\frac{2}{p}\right)=-1$, then it is easy to deduce from  \cite[The proof of Proposition 3.3]{aztaous} that  $\sqrt{\varepsilon_{ps}}=   (c_1\sqrt{p} +c_2\sqrt{s})$, for some integers $c_1$ and $c_2$.

	 	It follows that 
	 	$\sqrt{\varepsilon_{2pq}}$, $\sqrt{\varepsilon_{2sq}}$, $\sqrt{ \varepsilon_{2sq}\varepsilon_{2pq}}$, $\sqrt{\varepsilon_{ps}}$, $\sqrt{ \varepsilon_{2pq}\varepsilon_{ps}} \not\in K=\QQ(\sqrt {2pq},  \sqrt{ps})$
	 	 and there is exactly one element $\eta \in \{    \sqrt{ \varepsilon_{2sq}\varepsilon_{ps}},\sqrt{\varepsilon_{2pq} \varepsilon_{2sq}\varepsilon_{ps}} \}$ such that $\eta\in  K$, more precisely, $ \eta  =   \sqrt{ \varepsilon_{2sq}\varepsilon_{ps}}$ if and only if $s\equiv 7\pmod8$. Therefore,
	 	 a fundamental system of units of $F$ is 
	 	 $\{\varepsilon_{2pq}, \varepsilon_{ps} ,  \eta\}$ and so
	 	   $q(K)=  2$. On the other hand,  using Lemma \ref{wada's f.} and Remark \ref{clval}, we get:
	 	\begin{eqnarray*}
	 		h_2(K)&=&\frac{1}{4}q(K)h_2(2pq)h_2(2qs)h_2(ps),\\
	 		&=&\frac{1}{4}   \cdot 2 \cdot 2 \cdot 2 \cdot 2= 4.  
	 	\end{eqnarray*}
	 	Notice that $F_1= \mathbb{Q}(\sqrt{pq}, \sqrt{ps} , \sqrt{2})$  and $\QQ(\sqrt {2q},  \sqrt{s}, \sqrt{p })$ are two different   unramified extensions of $K$. So, by class field theory,  $\r2(\mathbf{C}l_2(K))\geq 2$.  Therefore, 
	 	$\mathbf{C}l_2(K)\simeq \ZZ/2\ZZ\times \ZZ/2\ZZ$.
	 	
	 	\noindent\ding{229} Now let us  prove that $\mathbf{C}l_2(F)\simeq \ZZ/2\ZZ\times \ZZ/2\ZZ$. We have:


	 	$\bullet$ According to \cite[Lemmas 5 and 7]{chemszekhniniazizilambdas} and \cite[Lemmas 4]{acztrends}, we have $\sqrt{ \varepsilon_{sq}}=y_1\sqrt{s} +y_2\sqrt{q}$,  for some integers $y_1$ and $y_2$.


	 	$\bullet$ According to \cite[Lemmas 2.3]{ChemsUnits9}, $\sqrt{\varepsilon_{pq}}= b_1\sqrt{p} +b_2\sqrt{q}$, for some integers $b_1$ and $b_2$.
	 	
	 	Thus, $\sqrt{\varepsilon_{pq} }$, $\sqrt{ \varepsilon_{sq}}$,  $\sqrt{ \varepsilon_{ps}}\not\in F=\mathbb{Q}(\sqrt{pq}, \sqrt{ps} )$  and  $\sqrt{\varepsilon_{pq}\varepsilon_{sq}}$,  $\sqrt{\varepsilon_{pq}\varepsilon_{ps}}\in F$. Therefore, a fundamental system of units of $F$ is 
	 	$\{\varepsilon_{pq}, \sqrt{\varepsilon_{pq}\varepsilon_{sq}} ,   \sqrt{\varepsilon_{pq}\varepsilon_{ps}}\}$. Thus, $q(F)=4$.
	 	It follows, by class number formula (cf. Lemma \ref{wada's f.} and Remark \ref{clval}), that we have:    \begin{eqnarray}\label{hf=4}
	 		 h_2(F)=\frac{1}{4}q(F)h_2(pq)h_2(ps)h_2(sq)=4.
	 	\end{eqnarray}  On the other hand, notice that $ \mathbb{Q}(\sqrt{sq})$ is a subfield of $F$ whose  class number is odd (cf. Remark \ref{clval}). So Lemma \ref{AmbiguousClassNumberFormula} gives $\r2(\mathbf{C}l_2(F))= t-1-e$, where $t$ $(=3$ or $4$ according to whether $s\equiv 7\pmod 8$ or not$)$ is the number of ramified primes in  $F/ \mathbb{Q}(\sqrt{sq})$ and the index $e$ is such that
	 	$(E_{\mathbb{Q}(\sqrt{sq})}:E_{\mathbb{Q}(\sqrt{sq})}\cap N_{F/\mathbb{Q}(\sqrt{sq})}(F))=2^{ e}$.  
	  As  by \cite[Théorème 3.3 et Théorème 3.4]{Azmouh2-rank} $e=0$ or $1$ according to whether   $s\equiv 7\pmod 8$ or not, we have $\r2(\mathbf{C}l_2(F))=2$. This completes the proof.
	 \end{proof}

	 \medskip
	In the  proof of the following lemma, we use the    properties of $\left(\frac{\cdot,\,\cdot}{ \mathfrak p}\right)$, the norm residue symbol. For more details concerning these properties,  
	 we refer the reader to 
	 \cite[Chapter II, Theorem 3.1.3]{grasbook} or 
	 \cite[Chapter X]{Fuller}.
	 
	 	 \medskip

	 \begin{lemma}\label{lemmarank}
	 	The   $2$-class group of $F_1=\QQ(\sqrt{pq}, \sqrt{ps}, \sqrt{2})$ is  isomorphic   to $\ZZ/2\ZZ\times \ZZ/2\ZZ$.
	 \end{lemma}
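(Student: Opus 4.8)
The goal is to establish simultaneously that $h_2(F_1)=4$ and that $r_2(\mathbf{C}l(F_1))=2$; together these force $\mathbf{C}l_2(F_1)\simeq\ZZ/2\ZZ\times\ZZ/2\ZZ$. Note that $F_1=\QQ(\sqrt{pq},\sqrt{ps},\sqrt2)=K(\sqrt2)$ is a real multiquadratic field of degree $2^3$, whose seven quadratic subfields are $\QQ(\sqrt{pq})$, $\QQ(\sqrt{ps})$, $\QQ(\sqrt{qs})$, $\QQ(\sqrt2)$, $\QQ(\sqrt{2pq})$, $\QQ(\sqrt{2ps})$ and $\QQ(\sqrt{2qs})$.

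For the order, the plan is to apply Kuroda's class number formula (Lemma \ref{wada's f.}) with $n=3$, for which $v=3(2^{2}-1)=9$ and $h(F_1)=\frac{1}{2^{9}}\,q(F_1)\prod_{i=1}^{7}h(k_i)$ with $q(F_1)=[E_{F_1}:\prod E_{k_i}]$; this reduces the determination of $h_2(F_1)$ to the $2$-class numbers of the seven quadratic subfields and the index $q(F_1)$. The values $h_2(2)=h_2(qs)=1$ and $h_2(pq)=h_2(2pq)=h_2(2qs)=2$ are recorded in the Remark following Lemma \ref{wada's f.}; the two remaining ones, $h_2(ps)$ and $h_2(2ps)$, follow from genus theory together with the hypothesis $\left(\frac{p}{s}\right)=1$. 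The main work, and the main obstacle, is the unit index $q(F_1)$: I would compute it by applying the Wada descent recalled just before Lemma \ref{cn} through the intervening biquadratic subfields, deciding step by step which products of the fundamental units $\varepsilon_{pq},\varepsilon_{ps},\varepsilon_{qs},\varepsilon_2,\varepsilon_{2pq},\varepsilon_{2ps},\varepsilon_{2qs}$ (and of the units of those biquadratic subfields) are squares in $F_1$. The explicit shapes of these units already used in the proof of Lemma \ref{cn} — $\sqrt{\varepsilon_{pq}}=b_1\sqrt p+b_2\sqrt q$, $\sqrt{2\varepsilon_{2pq}}=a_1\sqrt p+a_2\sqrt{2q}$, $\sqrt{\varepsilon_{ps}}=c_1\sqrt p+c_2\sqrt s$, the analogous expressions for $\sqrt{2\varepsilon_{2qs}}$ and for $\varepsilon_{2ps}$ — are exactly the input required; carrying out this bookkeeping yields $q(F_1)$, and substitution into Kuroda's formula gives $h_2(F_1)=4$.

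For the rank, the plan is to exhibit two distinct unramified quadratic extensions of $F_1$, in the spirit of the end of the proof of Lemma \ref{cn}. The first is $\QQ(\sqrt2,\sqrt p,\sqrt q,\sqrt s)=F_1(\sqrt q)$: using $p\equiv5\pmod8$, $q\equiv3\pmod8$ and $s\equiv3\pmod4$ one checks that the inertia groups of $p$, of $q$, of $s$ and of $2$ do not grow in $F_1(\sqrt q)/\QQ$ compared with $F_1/\QQ$, hence $F_1(\sqrt q)/F_1$ is unramified. A second one is obtained by adjoining to $F_1$ either $\sqrt{\alpha_2^*}$ from \cite[Theorem 3.1 (iv)]{BaeYue} (when $s\equiv7\pmod8$) or, when $s\equiv3\pmod8$, the element produced by the norm-residue-symbol argument already used for $F$ in the proof of Lemma \ref{cn}. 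Hence $r_2(\mathbf{C}l(F_1))\ge2$, while $h_2(F_1)=4$ forces $r_2(\mathbf{C}l(F_1))\le2$; therefore $r_2(\mathbf{C}l(F_1))=2$ and $\mathbf{C}l_2(F_1)\simeq\ZZ/2\ZZ\times\ZZ/2\ZZ$. Alternatively, the lower bound $r_2(\mathbf{C}l(F_1))\ge2$ can be obtained from the ambiguous class number formula (Lemma \ref{AmbiguousClassNumberFormula}) applied to a quadratic subextension $F_1/k'$ with $h(k')$ odd.
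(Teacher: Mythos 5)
Your outline (show $h_2(F_1)=4$ and $r_2(\mathbf{C}l(F_1))=2$) would indeed suffice, but the first half is not actually proved: the entire upper bound rests on the sentence ``carrying out this bookkeeping yields $q(F_1)$, and substitution into Kuroda's formula gives $h_2(F_1)=4$.'' For the degree-$8$ field $F_1$ Kuroda's formula reads $h_2(F_1)=\frac{1}{2^{9}}q(F_1)\prod_{i=1}^{7}h_2(k_i)=\frac{1}{32}\,q(F_1)\,h_2(2ps)$ with the values recorded in the paper, so your claim amounts to $q(F_1)h_2(2ps)=128$, i.e.\ (when $h_2(2ps)=2$) to the unit index $q(F_1)=[E_{F_1}:\prod E_{k_i}]$ being exactly $2^{6}$. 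Establishing that requires deciding, for essentially all products of the seven fundamental units and of the units of the intermediate biquadratic fields, whether they are squares in $F_1$; this is the hardest and most error-prone part of the whole argument, it is not a routine consequence of the unit shapes quoted from Lemma \ref{cn}, and you give no computation. Without it you only have the lower bound $r_2(\mathbf{C}l(F_1))\ge 2$ (your extension $F_1(\sqrt{q})=\QQ(\sqrt{2},\sqrt{p},\sqrt{q},\sqrt{s})$ is correctly unramified over $F_1$, and the ambiguous-class-number alternative you mention also works), and rank $\ge 2$ alone does not pin down the group.

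The paper avoids the unit-index computation entirely. It gets the lower bound from Lemma \ref{AmbiguousClassNumberFormula} applied to $F_1/L$ with $L=\QQ(\sqrt{qs},\sqrt{2})$ of odd class number, and then invokes the structural fact behind Remark \ref{rem2}: since $F_1/K$ is an \emph{unramified} quadratic extension and $\mathbf{C}l_2(K)\simeq \ZZ/2\ZZ\times\ZZ/2\ZZ$ (Lemma \ref{cn}), the group $\mathbf{C}l_2(F_1)\simeq H/H'$ for a subgroup $H$ of index $2$ in $G_K$, and the classification of $2$-groups with abelianization of type $(2,2)$ forces $H/H'$ to be either cyclic or of type $(2,2)$. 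Combined with rank $\ge 2$ this immediately gives the lemma, with no class number formula for $F_1$ needed. If you want to keep your route, you must either carry out the $q(F_1)$ computation in full, or replace the order computation by this group-theoretic upper bound, which is exactly the piece your proposal is missing.
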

	 \begin{proof} 
	 	Put $L=\QQ(\sqrt{sq}, \sqrt{2})$. Notice that $F_1=L(\sqrt{ps})$. Pursuant to \cite[p. 19]{acztrends} and \cite[Corollaries 1 and 2]{chemszekhniniazizilambdas}, we have  $E_{L}=\left\langle -1, \varepsilon_{2},  \varepsilon_{ qs},
	 	\sqrt{\varepsilon_{2qs}}\text{ or }\sqrt{\varepsilon_{qs}\varepsilon_{2qs}}\right\rangle $, according to whether $s\equiv 3\pmod 8$ or not. Note that  $h_2(L)=1$ (cf. \cite[Corollary 21.4]{connor88}). Let us assume that  $s\equiv 3\pmod 8$.
	 	 	Therefore, by Lemma \ref{AmbiguousClassNumberFormula}, the rank of the $2$-class group of $F_1$ is  $\r2(\mathbf{C}l_2(F_1))=t-1-e$, where ${e}$ is defined by $(E_{L}:E_{L}\cap N_{F_1/L}(F_1))=2^{ e}$ and $t=4$ is the number of ramified primes in $F_1/L$. Thus, we have  $\r2(\mathbf{C}l_2(F_1))=3-e$.
	  	Let $\mathfrak p_{k}$ be a prime ideal of $k$ above $p$, where $k$ is a subfield of $L$. Notice that $p$ decomposes in $\mathbb{Q}(\sqrt{qs})$ and there are exactly $2$ prime ideals
	 	of $L$ laying above $p$. Using the properties of norm residue symbols, we obtain:
	 		\begin{eqnarray*}
	 		\left(\frac{\varepsilon_2,\,ps}{ \mathfrak p_{L}}\right)=\left(\frac{\varepsilon_2,\,p}{ \mathfrak p_{L}}\right)&=&\left(\frac{N_{{L}/\mathbb Q(\sqrt{qs})}(\varepsilon_2),\,p}{ \mathfrak p_{\mathbb Q(\sqrt{qs})}}\right) 
	 		 = \left(\frac{-1,\,p}{ \mathfrak p_{\mathbb Q(\sqrt{qs})}}\right) =\left(\frac{-1,\,p}{ p}\right)  =1, 
	 	\end{eqnarray*}
	 	 	and in a similar way $\left(\frac{\varepsilon_{qs},\,ps}{ \mathfrak p_{L}}\right)=\left(\frac{-1,\,ps}{ \mathfrak p_{L}}\right)=1$.
	 	It follows that $e\leq 1$ and so  $\r2(\mathbf{C}l_2(F_1))\geq 2$. According to Lemma  \ref{cn}, the $2$-class group of $K$ is isomorphic  to $\ZZ/2\ZZ\times \ZZ/2\ZZ$ and since $F_1/K$ is an  unramified quadratic extension, it follows that the $2$-class group of $F_1$ is isomorphic to $\ZZ/2\ZZ\times \ZZ/2\ZZ$ (cf. Remark \ref{rem2}).
	 	We   proceed similarly for the case $s\equiv 7\pmod 8$. This completes the proof.
	 \end{proof}

	 \begin{corollary}
	 	  Put $ A(K):=\QQ(\sqrt {2q},  \sqrt{s},\sqrt{p})$ and $ C(K):=\QQ(\sqrt {2p},  \sqrt{2s},\sqrt{q})$. 
	 	Then, the  $2$-class group  of $A(K)$ $($resp.  $C(K)$$)$ is isomorphic to $\ZZ/2\ZZ\times \ZZ/2\ZZ$ $($resp. is cyclic$)$.
	 \end{corollary}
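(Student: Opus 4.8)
The plan is to deduce the statement from Lemmas \ref{cn} and \ref{lemmarank} together with the structural description of $G_K=\mathrm{Gal}(\mathcal{L}(K)/K)$ recalled before Remark \ref{rmk 1 preliminaries}. By Lemma \ref{cn} we have $\mathbf{C}l_2(K)\simeq\ZZ/2\ZZ\times\ZZ/2\ZZ$, so $K$ has exactly three quadratic unramified extensions, namely the three quadratic subextensions $A(K)$, $B(K)$, $C(K)$ of $K^{(1)}/K$, and by Remark \ref{rmk 1 preliminaries} the distinguished one $C(K)$ has cyclic $2$-class group. The first step is then an identification: one checks that $F_1=\QQ(\sqrt{2},\sqrt{pq},\sqrt{ps})$ and the two triquadratic fields appearing in the statement are three pairwise distinct degree-$2$ extensions of $K$, each unramified over $K$. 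Containment of $K$ and the degree being $2$ are immediate by comparing the lists of quadratic subfields; unramifiedness of $F_1/K$ is already contained in the proof of Lemma \ref{lemmarank}, and for the remaining two fields it is the same routine check (no finite or infinite prime of a suitable quadratic subfield such as $\QQ(\sqrt{qs})$ ramifies in the relevant step), using the congruences $p\equiv5\pmod 8$, $q\equiv3\pmod8$, $s\equiv3\pmod 4$ and the hypotheses $\left(\frac{p}{q}\right)=\left(\frac{p}{s}\right)=1$. Once this is carried out, these three fields are precisely $A(K)$, $B(K)$, $C(K)$ in some order.

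Granting the identification, the conclusion is short. By Lemma \ref{lemmarank}, $\mathbf{C}l_2(F_1)\simeq\ZZ/2\ZZ\times\ZZ/2\ZZ$ is not cyclic; since $F_1$ is one of the three quadratic unramified extensions of $K$, Remark \ref{rem2} rules out the possibilities $K^{(1)}=K^{(2)}$ and $G_K\simeq Q_8$, and hence places us in the situation where the $2$-class group of exactly one of $A(K)$, $B(K)$, $C(K)$ is cyclic (necessarily $C(K)$, consistently with Remark \ref{rmk 1 preliminaries}) while the other two are of type $(2,2)$. As $F_1$ is one of the two non-distinguished extensions, the other non-distinguished extension, $A(K)$, then satisfies $\mathbf{C}l_2(A(K))\simeq\ZZ/2\ZZ\times\ZZ/2\ZZ$, and $\mathbf{C}l_2(C(K))$ is cyclic, which is the assertion of the corollary.

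The step I expect to be the main obstacle is making sure that the triquadratic field described in the statement as $C(K)$ is really the distinguished extension and not one of the two of type $(2,2)$: a priori the previous paragraph only tells us that, among the two extensions different from $F_1$, one is cyclic and the other of type $(2,2)$. To pin this down I would compute the $2$-rank of $\QQ(\sqrt{2p},\sqrt{2s},\sqrt{q})$ directly by the ambiguous class number formula (Lemma \ref{AmbiguousClassNumberFormula}), choosing inside it a degree-$4$ subfield of odd ($=1$) class number (for instance one containing $\QQ(\sqrt{q})$), counting the primes ramifying in the corresponding quadratic step, and evaluating the norm residue symbols of the relevant units exactly as in the proof of Lemma \ref{lemmarank}; the expected value $r_2=1$ shows this field is cyclic, hence equals $C(K)$, and therefore the other field is $A(K)$, which is of type $(2,2)$. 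Beyond this norm-residue-symbol bookkeeping and the easy ramification checks, nothing further is required.
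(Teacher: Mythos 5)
Your reduction of the corollary to Remark \ref{rem2} is essentially the paper's: since $\mathbf{C}l_2(K)\simeq\ZZ/2\ZZ\times\ZZ/2\ZZ$, the field $K$ has exactly three unramified quadratic extensions ($F_1$ and the two triquadratic fields of the statement — note that $A(K)$ as printed, $\QQ(\sqrt{2q},\sqrt2,\sqrt q)$, is a typo for the degree-$8$ field $K(\sqrt{2q})=\QQ(\sqrt{2q},\sqrt p,\sqrt s)$, so the identification is not quite "immediate by comparing quadratic subfields" for the field as literally written), and Lemma \ref{lemmarank} shows $\mathbf{C}l_2(F_1)$ is non-cyclic, which rules out the cases $K^{(1)}=K^{(2)}$ and $G_K\simeq Q_8$; hence exactly one of the two remaining extensions is cyclic and the other is of type $(2,2)$. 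Up to this point you and the paper coincide.

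The genuine gap is precisely the step you flag as "the main obstacle": deciding which of the two remaining extensions is the distinguished (cyclic) one. You propose to compute $r_2\bigl(\mathbf{C}l(\QQ(\sqrt{2p},\sqrt{2s},\sqrt q))\bigr)=1$ directly via Lemma \ref{AmbiguousClassNumberFormula}, but this is announced rather than executed, and it is not routine. That lemma requires a quadratic step over a base of \emph{odd} class number: among the degree-$4$ subfields of $\QQ(\sqrt{2p},\sqrt{2s},\sqrt q)$, any one containing $\QQ(\sqrt{ps})$ or $\QQ(\sqrt{pqs})$ has even $2$-class number by Kuroda's formula (since $h_2(ps)=2$ and $h_2(pqs)\geq 4$), and for a viable candidate such as $\QQ(\sqrt{2p},\sqrt q)$ one must first prove its unit index equals $1$ — itself a computation of the kind carried out in Lemma \ref{cn}, which you do not perform. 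Even with such a base in hand, cyclicity requires the sharp bound $e\geq t-2$, i.e.\ exhibiting enough fundamental units that fail to be norms; you determine neither $t$ (the behaviour of the prime $2$ in the step adjoining $\sqrt{2s}$ needs care) nor a single norm residue symbol. The paper sidesteps all of this by arguing in the opposite direction: it invokes Ouyang--Zhang's explicit Hilbert genus field of $\QQ(\sqrt{2q},\sqrt{ps})$ to produce an unramified biquadratic extension of $A(K)$, forcing $r_2(\mathbf{C}l(A(K)))\geq 2$, hence $\mathbf{C}l_2(A(K))\simeq\ZZ/2\ZZ\times\ZZ/2\ZZ$, and then $\mathbf{C}l_2(C(K))$ is cyclic by elimination. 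Your alternative route is plausible in principle, but as written the discriminating computation — the actual content of the corollary — is missing.
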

	 \begin{proof}
	 	Put $k=\QQ(\sqrt {2q},  \sqrt{ps})$. Notice that $ps\equiv 3\pmod 4$.  As $2$ is totally ramified in $k$ and according to \cite[Theorem 4.4]{OuyangZhang} $k$ admits an unramified    extension of degree $8$ of the form $\QQ(\sqrt {2 },\sqrt {q},  \sqrt{p},\sqrt{s}, \alpha)$, where $\alpha$ is defined in \cite[Theorem 4.4]{OuyangZhang}, the field  $ A(K)$ admits a biquadratic unramified extension. So, by class field theory, we have  $\r2(\mathbf{C}l_2(A(K)))\geq 2$. Therefore, since $ A(K)/K$ is unramified quadratic extension such that the $2$-class group of $K$ is    isomorphic to $\ZZ/2\ZZ\times \ZZ/2\ZZ$ (cf. Lemma \ref{cn}), we deduce that     $ \mathbf{C}l_2(A(K))\simeq \ZZ/2\ZZ\times \ZZ/2\ZZ$ (cf. Remark \ref{rem2}). Since $F_1/K$ is unramified, we have  $ \mathbf{C}l_2(C(K))$ is cyclic (cf. Lemma \ref{lemmarank} and   Remark \ref{rem2}).
	 \end{proof}

	 \begin{lemma}\label{cn12hpqs} 
	 	The $2$-class group of  $ \QQ(\sqrt{p }, \sqrt {q},  \sqrt{s}  )$ is isomorphic to  $  \mathbb{Z}/m\mathbb{Z},$ 
	 	where $m=\frac{1}{2}h_2(pqs)$.
	 \end{lemma}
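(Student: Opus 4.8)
\emph{Strategy.} Set $C(F):=\QQ(\sqrt p,\sqrt q,\sqrt s)$; this is a real triquadratic field whose seven quadratic subfields are $\QQ(\sqrt p),\QQ(\sqrt q),\QQ(\sqrt s),\QQ(\sqrt{pq}),\QQ(\sqrt{ps}),\QQ(\sqrt{qs}),\QQ(\sqrt{pqs})$. I would establish the two assertions ``$h_2(C(F))=\tfrac12 h_2(pqs)$'' and ``$\mathbf{C}l_2(C(F))$ is cyclic'' separately. For the order I would apply Kuroda's class number formula (Lemma~\ref{wada's f.}) with $n=3$, so $v=3(2^{2}-1)=9$; substituting the known values $h_2(p)=h_2(q)=h_2(s)=h_2(qs)=1$ and $h_2(pq)=h_2(ps)=2$ gives
\[
h_2(C(F))=\frac{q_2(C(F))}{2^{7}}\,h_2(pqs),
\]
where $q_2(C(F))$ is the $2$-part of the unit index $q(C(F))=[E_{C(F)}:\prod_i E_{k_i}]$. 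Everything then comes down to showing $q_2(C(F))=2^{6}$.

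\emph{The unit index (the main obstacle).} I would run Wada's descent (recalled on p.~\pageref{algo wada}) through the three biquadratic subfields $\QQ(\sqrt p,\sqrt q)$, $\QQ(\sqrt p,\sqrt s)$, $\QQ(\sqrt q,\sqrt s)$ and then through $C(F)$, using exactly the unit facts already used in the proof of Lemma~\ref{cn}: $N(\varepsilon_p)=-1$, $N(\varepsilon_q)=N(\varepsilon_s)=1$, and $\sqrt{\varepsilon_{pq}}=b_1\sqrt p+b_2\sqrt q$, $\sqrt{\varepsilon_{ps}}=c_1\sqrt p+c_2\sqrt s$, $\sqrt{\varepsilon_{qs}}=y_1\sqrt s+y_2\sqrt q$ for suitable integers. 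These produce fundamental systems of units for the biquadratic subfields (in particular each has $2$-class number $1$, which I reuse below), and the task is then to decide exactly which square classes of products of these units acquire square roots in the degree-$8$ field $C(F)$; the careful bookkeeping here — this is the only genuinely laborious step — yields $q(C(F))=2^{6}$, hence $h_2(C(F))=\tfrac12 h_2(pqs)=:m$.

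\emph{Cyclicity.} I would avoid Hilbert-symbol computations as follows. Note $C(F)=F(\sqrt p)$. Since $p\equiv 5\pmod 8$ and $\left(\frac pq\right)=\left(\frac ps\right)=1$ — so $\left(\frac qp\right)=\left(\frac sp\right)=1$ by reciprocity — at every prime of $F$ above $p$, $q$ or $s$ the element $\sqrt p$ already lies in the completion of $F$, and a short computation in $\QQ_2^{*}/(\QQ_2^{*})^{2}$ shows that the primes of $F$ above $2$ are unramified (inert or split) in $C(F)/F$; as $F$ is totally real, $C(F)/F$ is unramified, so $C(F)$ is one of the three unramified quadratic extensions of $F$. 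To see it is the one with cyclic $2$-class group, I would apply the ambiguous class number formula (Lemma~\ref{AmbiguousClassNumberFormula}) to $C(F)/M$ with $M=\QQ(\sqrt p,\sqrt q)$ when $\left(\frac sq\right)=-1$ and $M=\QQ(\sqrt p,\sqrt s)$ when $\left(\frac sq\right)=1$ (equivalently $\left(\frac qs\right)=-1$, since $q\equiv s\equiv 3\pmod 4$). In either case $M$ has $2$-class number $1$, the prime $2$ is unramified in $C(F)/M$, and the new prime ($s$, resp.\ $q$) is inert in $\QQ(\sqrt q)$ (resp.\ $\QQ(\sqrt s)$) and hence has exactly two prime divisors in $M$; so the number $t$ of primes ramifying in $C(F)/M$ is $2$, whence
\[
r_2(\mathbf{C}l(C(F)))=t-1-e=1-e\le 1 .
\]
Thus $\mathbf{C}l_2(C(F))$ is cyclic, and together with the order computation this gives $\mathbf{C}l_2(C(F))\simeq\ZZ/m\ZZ$ with $m=\tfrac12 h_2(pqs)$. (Since genus theory for $\QQ(\sqrt{pqs})$ forces $4\mid h_2(pqs)$, in fact $m\ge 2$ and $e=0$, so $r_2=1$. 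Alternatively, cyclicity follows from Remark~\ref{rmk 1 preliminaries} once $C(F)$ is identified with the field called $C(F)$ there.)
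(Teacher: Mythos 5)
There is a genuine gap at the heart of your argument: the assertion $q(C(F))=2^{6}$ for the degree-$8$ field. Under your route this unit-index computation \emph{is} the lemma — everything else is bookkeeping — and you explicitly do not carry it out ("the careful bookkeeping here \dots yields $q(C(F))=2^{6}$"). Determining which of the $63$ nontrivial square classes of $\prod_i E_{k_i}$ acquire square roots in a real field of degree $8$ is exactly the kind of computation that is hard to do and easy to get wrong, and nothing in the unit facts you list pins the answer down. The paper sidesteps this entirely: it works with the biquadratic subfield $L'=\QQ(\sqrt{pq},\sqrt{s})$, where Kuroda's formula needs only $q(L')=2$ (which follows from the cited shape of $\varepsilon_{pq}$), so $h_2(L')=h_2(pqs)$; it then shows $\mathbf{C}l_2(L')$ is cyclic by the ambiguous class number formula over $\QQ(\sqrt{s})$, using $\varepsilon_s=2u^2$ and $\left(\frac{2}{p}\right)=-1$ to force $e\geq 1$. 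Since $\QQ(\sqrt p,\sqrt q,\sqrt s)$ is the unique unramified quadratic extension of a field with cyclic $2$-class group, class field theory gives both conclusions at once: its $2$-class group is cyclic of order $\tfrac12 h_2(L')=\tfrac12 h_2(pqs)$. If you want to keep a direct approach, you should at least reduce to this kind of two-step descent rather than attack the degree-$8$ unit index head on.

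Two further problems in your cyclicity argument. First, the case division is inverted: for $q\equiv s\equiv 3\pmod 4$ quadratic reciprocity gives $\left(\frac{q}{s}\right)=-\left(\frac{s}{q}\right)$, so when $\left(\frac{s}{q}\right)=-1$ the prime $s$ \emph{splits} in $\QQ(\sqrt q)$ (and also in $\QQ(\sqrt p)$), hence has four divisors in $M=\QQ(\sqrt p,\sqrt q)$ and $t=4$, not $2$; your argument then only yields $r_2\leq 3$. Swapping the two choices of $M$ repairs this, but as written the step fails. You also use without proof that $M$ has odd class number, which itself requires a unit-index computation ($q(M)=2$, resp. $q(\QQ(\sqrt q,\sqrt s))=4$). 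Second, the parenthetical fallback "cyclicity follows from Remark \ref{rmk 1 preliminaries} once $C(F)$ is identified with the field called $C(F)$ there" is circular: identifying which of the three unramified quadratic extensions of $F$ is the one with cyclic $2$-class group is precisely what has to be proved.
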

	 \begin{proof} Put $L'=\QQ(\sqrt{p q},   \sqrt{s}  )$.  Without loss of generality, we may assume that $\left(\frac{s}{q}\right)=-1$ (if not we exchange the places of $q$ and $s$).
	 	Notice that   the class number of $\QQ(\sqrt{s})$  is odd (cf. Remark \ref{clval}). So by the ambiguous class number formula
	 	$\r2(\mathbf{C}l_2(L'))=3-1-e=2-e$, where ${e}$ is such that $(E_{\QQ(\sqrt{s})}:E_{\QQ(\sqrt{s})}\cap N_{L'/\QQ(\sqrt{s})}(L'))=2^{ e}$.
	 	According to 	 \cite[Proposition 2.6]{OuyangZhang}, we have $\varepsilon_s=2u^2$ for some $u\in \QQ(\sqrt{s}  )$. Thus,   $\left(\frac{\varepsilon_s,\,pq}{ \mathfrak p_{\QQ(\sqrt{s})}}\right)=\left(\frac{2u^2,\,p }{ \mathfrak p_{\QQ(\sqrt{s})}}\right)=\left(\frac{2,\,p }{ \mathfrak p_{\QQ(\sqrt{s})}}\right)= \left(\frac{2}{ p}\right)  =-1$.
	 	Therefore, $e\geq 1$, but from the fact that the class number of $ \QQ(\sqrt{p q},   \sqrt{s})$ is even   (cf. \cite[Corollary 21.4]{connor88}), we deduce that
	 	$\r2(\mathbf{C}l_2(L'))=1$. Thus, $\mathbf{C}l_2(L')$ is a cyclic group. As $S/L'$, where $S=\QQ(\sqrt{p }, \sqrt {q},  \sqrt{s}  )$, is a quadratic unramified extension, it follows, by class field theory,  that $\mathbf{C}l_2(S)$ is cyclic and $h_2(S)=\frac{1}{2} h_2(L')$.
	 	Therefore, by Lemma \ref{wada's f.} and Remark \ref{clval}, we have :
 		\begin{eqnarray*}
 		h_2(L')&=&\frac{1}{4}q(L')h_2(pq)h_2(s)h_2(pqs)
 		=\frac{1}{4} q(L') \cdot 2 \cdot 1 \cdot h_2(pqs) =\frac{1}{2} q(L')    \cdot h_2(pqs)   .  
 	\end{eqnarray*} 
	 	We note that according to \cite[Lemma 2.3 (2)(i)]{ChemsUnits9},	 $\varepsilon_{pq}=a+b\sqrt{pq}$ for some integers $a$ and $b$ such that $(a\pm1)$ is not a square in $\NN$. So by
	 	\cite[Proposition 3.2]{AziziZekhnini17}, we have $\{\varepsilon_{s},\varepsilon_{pq}, \sqrt{\nu }\}$ is a fundamental system of units of $L'$, where $\nu= {\varepsilon_{pqs}}$,
	 	$ {\varepsilon_{s}\varepsilon_{pq}}$ or $ {\varepsilon_{s}\varepsilon_{pq}\varepsilon_{pqs}}$. It follows that 	$q(L')=2$. Therefore,  $h_2(S)=	\frac{1}{2}\cdot (\frac{1}{2} q(L')    \cdot h_2(pqs))=\frac{1}{2} h_2(pqs).$ So the result.
	 \end{proof}
	 
	 The following corollary follows from the above proof.
	 \begin{corollary}
	 	The $2$-class groups of $\QQ(\sqrt{p q},   \sqrt{s}  )$ and $\QQ(\sqrt{p s},   \sqrt{q}  )$ are cyclic of order $h_2(pqs)$.
	 \end{corollary}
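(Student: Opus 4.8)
The plan is to read both assertions off the proof of Lemma~\ref{cn12hpqs}, together with one short addition. That proof works with $L'=\QQ(\sqrt{pq},\sqrt{s})$ and establishes $q(L')=2$ and the cyclicity of $\mathbf{C}l_2(L')$; Lemma~\ref{wada's f.} then gives $h_2(L')=\tfrac14 q(L')h_2(pq)h_2(s)h_2(pqs)=h_2(pqs)$, so $\mathbf{C}l_2(\QQ(\sqrt{pq},\sqrt{s}))$ is cyclic of order $h_2(pqs)$. What remains is to run the same two steps for $L''=\QQ(\sqrt{ps},\sqrt{q})$ and, since the normalization $\left(\tfrac sq\right)=-1$ used in the proof of Lemma~\ref{cn12hpqs} only settles whichever of $L',L''$ has the relevant odd prime inert, to treat as well the configuration that that proof sets aside.

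For the unit indices, I would argue as in the proof of Lemma~\ref{cn12hpqs}: one has $\sqrt{\varepsilon_{pq}}=b_1\sqrt p+b_2\sqrt q$ and, by \cite[proof of Proposition~3.3]{aztaous} (using $\left(\tfrac2p\right)=-1$), $\sqrt{\varepsilon_{ps}}=c_1\sqrt p+c_2\sqrt s$ with $b_i,c_i\in\ZZ\setminus\{0\}$; squaring and using $N(\varepsilon_{pq})=N(\varepsilon_{ps})=1$ (both $q,s\equiv3\pmod4$) gives $\varepsilon_{pq}=a+b\sqrt{pq}$ with $\{a-1,a+1\}=\{2pb_1^{2},2qb_2^{2}\}$ and $\varepsilon_{ps}=a'+b'\sqrt{ps}$ with $\{a'-1,a'+1\}=\{2pc_1^{2},2sc_2^{2}\}$, so $a\pm1$ and $a'\pm1$ are never squares in $\NN$ and \cite[Proposition~3.2]{AziziZekhnini17} yields $q(L')=q(L'')=2$; the class number formula then gives $h_2(L')=h_2(L'')=h_2(pqs)$.

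For the $2$-rank, I would apply Lemma~\ref{AmbiguousClassNumberFormula} to $L'/\QQ(\sqrt s)$ and to $L''/\QQ(\sqrt q)$ (both base fields have odd class number, since $q\equiv s\equiv3\pmod4$). In each extension the archimedean places are unramified; so is the place above $2$, because among the quadratic subfields of $L'$ — namely $\QQ(\sqrt s),\QQ(\sqrt{pq}),\QQ(\sqrt{pqs})$ — only the first two (of discriminant $4s$, resp.\ $4pq$) ramify at $2$, so the inertia at $2$ over $\QQ$ fixes $\QQ(\sqrt{pqs})$, has order $2$, and hence meets $\mathrm{Gal}(L'/\QQ(\sqrt s))$ trivially; likewise for $L''$. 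The odd ramified primes are the two primes above $p$ ($p$ splits in each base field, as $\left(\tfrac ps\right)=\left(\tfrac pq\right)=1$ and $p\equiv1\pmod4$), together with the prime(s) above $q$ in $\QQ(\sqrt s)$, resp.\ above $s$ in $\QQ(\sqrt q)$; so $t=3$ if that prime is inert and $t=4$ if it splits, and since $\left(\tfrac sq\right)\left(\tfrac qs\right)=-1$ exactly one of $L',L''$ falls in each case.

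Finally, the norm index $2^{e}$: the base unit group is $\langle-1,\varepsilon\rangle$ with $\varepsilon=\varepsilon_s$ (resp.\ $\varepsilon_q$), and $\varepsilon=2u^2$ for some $u$ in the base — hence $\varepsilon\equiv2$ modulo squares — by \cite[Proposition~2.6]{OuyangZhang}, as the prime is $\equiv3\pmod4$. At a prime $\mathfrak p$ of the base above $p$ one gets $\left(\tfrac{\varepsilon,\,\cdot}{\mathfrak p}\right)=\left(\tfrac{2,\,p}{\mathfrak p}\right)=\left(\tfrac2p\right)=-1$, so $e\geq1$, which settles $r_2=t-1-e\leq1$ when $t=3$. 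When $t=4$ one checks in addition that $-1$ and $-\varepsilon$ are non-norms, from $\left(\tfrac{-1,\,\cdot}{\mathfrak q}\right)=\left(\tfrac{-1}{q}\right)=-1$ (resp.\ $\left(\tfrac{-1}{s}\right)=-1$) at a prime $\mathfrak q$ of the base above the split odd prime and $\left(\tfrac{-\varepsilon,\,\cdot}{\mathfrak p}\right)=\left(\tfrac{-1}{p}\right)\left(\tfrac{\varepsilon,\,\cdot}{\mathfrak p}\right)=-1$ (as $p\equiv1\pmod4$), giving $e=2=t-2$ and again $r_2\leq1$. On the other hand $r_2\geq1$, since $\QQ(\sqrt p,\sqrt q,\sqrt s)$ is an unramified quadratic extension of each of $L'$ and $L''$ — the ramification fact already used (for $L'$) in the proof of Lemma~\ref{cn12hpqs}. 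Hence $\mathbf{C}l_2(L')$ and $\mathbf{C}l_2(L'')$ are cyclic of order $h_2(pqs)$, as asserted. I expect the crux to be this last norm-index computation in the case $t=4$, the one avoided by the normalization in Lemma~\ref{cn12hpqs}, where all three of $-1,\varepsilon,-\varepsilon$ must be shown to be non-norms; one should also be careful that, the congruences on $q$ and $s$ not being symmetric ($q\equiv3\pmod8$ but $s$ only $\equiv3\pmod4$), the unit descriptions quoted from \cite{ChemsUnits9,chemszekhniniazizilambdas,acztrends,aztaous} are each invoked in the exact congruence class in which they are stated.
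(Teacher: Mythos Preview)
Your argument is correct and follows the paper's approach: the paper's own justification for the corollary is simply ``From the above proof we have the following,'' i.e., reading off the cyclicity and the class-number computation for $L'=\QQ(\sqrt{pq},\sqrt{s})$ from the proof of Lemma~\ref{cn12hpqs}. Your treatment is in fact more complete, since the normalization $\bigl(\tfrac{s}{q}\bigr)=-1$ adopted there handles directly only the field with $t=3$, and you supply the extra norm-index computation ($-1,\varepsilon,-\varepsilon$ all non-norms, hence $e=2$) needed for the companion field in the $t=4$ case.
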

	 
	 \medskip
	 
	 The following proposition gives  a family of real triquadratic number fields of the form $\QQ(\sqrt{p }, \sqrt {q},  \sqrt{s})$, where $p$, $q$ and $s$ are odd primes  (called Fröhlich multiquadratic fields)
	 whose Iwasawa module is cyclic non trivial.   To our knowledge, the only previously known results about the Iwasawa module of such fields are in
	 \cite[p. 1206, Corollary]{Ajn},  which provide  a family of these fields whose Iwasawa module is trivial.
	 

	\begin{proposition}\label{proptriq}
	  Let      $C_n(F)= \QQ(\sqrt{p }, \sqrt {q},  \sqrt{s},2\cos( {2\pi}/{2^{n+2}}))$, where $n$ is a positive integer. Then $2$-class group of $C_n(F)$ is cyclic non trivial.
	\end{proposition}
	 \begin{proof}
	 	Recall that by Lemma \ref{cn}, the $2$-class group of $K$ is   isomorphic to $\ZZ/2\ZZ\times \ZZ/2\ZZ$. Thus,   $C_1(F)=\QQ(\sqrt{p }, \sqrt {q},  \sqrt{s},\sqrt{2})$     is the Hilbert $2$-class field of $K$. In fact, $C_1(F)/K$
	 	is an unramified abelian extension of degree $4$. So according to the theoretic properties in Section \ref{sec2}, 
	 	 the  $2$-class group  of $C_1(F)$ is cyclic. Note that the extension $C_1(F)/C(F)$, with  $C(F)=\QQ(\sqrt{p }, \sqrt {q},  \sqrt{s}  )$,  is totally ramified.   
	 	 As $h_2(C(F))=\frac12 h_2(pqs)$ is divisible by $2$ (cf. \cite[Corollary 19.7]{connor88}), it follows   $h_2(C_1(F))$ is divisible by $2$ and so the $2$-class groups of 
	 	 $C(F)$ and $C_1(F)$ are cyclic non trivial.
	 	  Hence the result by Fukuda's Theorem (cf. Lemma \ref{lm fukuda}).
	 \end{proof}

	 \bigskip
	 	With the notations fixed at the beginning of the previous section, we have the following diagram. Note that a solid line indicates that the extension is unramified, whereas a dashed line indicates that the extension is ramified at   $2$.


	 \newpage
	 
	 \rotatebox{270}{
	 	{

	 		\begin{minipage}{20cm}
	 			\begin{center}
	 				
	 				\begin{figure}[H]
	 					\caption{Layers of  $\mathbb Z_2$-extensions:}\label{fig3}
	 					{ \footnotesize
	 						\hspace*{-3cm}
	 						\begin{tikzpicture} [scale=1.2]
	 							\node (F)  at (-0.5,  0) {$\qquad F=\QQ(\sqrt{pq},\sqrt{ps})$};
	 							\node (K)  at (-3,  0) {$ K=\QQ(\sqrt {2pq},  \sqrt{ps})$};
	 							\node (CK)  at (2.25,  2) {$ C(K)=\QQ(\sqrt {2p},  \sqrt{2s},\sqrt{q})$};
	 							
	 							\node (F1)  at (-0.5,  2) {$F_1$};
	 							\node (AF)  at (6,  2) {$A(F)$};
	 							\node (BF)  at (8,  2) {$B(F)$};
	 							\node (CF)  at (10,  2) {\qquad\qquad\quad$C(F)=\QQ(\sqrt{p }, \sqrt {q},  \sqrt{s})$};%
	 							
	 							\node (F2)  at (-0.5,  4) {$F_2$};
	 							\node (A1F)  at (6,  4) {$A_1(F)$};
	 							\node (B1F)  at (8,  4) {$B_1(F)$};
	 							\node (C1F)  at (10,  4) {$\qquad\qquad\qquad\qquad C_1(F)=\QQ(\sqrt{p }, \sqrt {q},  \sqrt{s},\sqrt{2})$};
	 							\node (HF1)  at (15,  4) {$  F^{(1)} $};
	 							\node (HF2)  at (16.5,  6.5) {$  F^{(2)} $};
	 							
	 							\node (F3)  at (-0.5,  6) {$ F_3$};
	 							\node (A2F)  at (6,  6) {$A_2(F)$};
	 							\node (B2F)  at (8,  6) {$B_2(F)$};
	 							\node (C2F)  at (10,  6) {$\qquad\qquad\qquad\qquad\qquad\;  C_2(F)=\QQ(\sqrt{p }, \sqrt {q},  \sqrt{s},\sqrt{1+\sqrt{2}})$};
	 							\node (HF1SQRT2)  at (15,  6) {$F^{(1)}(\sqrt{2}) $};
	 							\node (HF2SQRT2)  at (16.5,  8.5) {$F^{(2)}(\sqrt{2}) $};

	 							\node (F3a)  at (-0.5,  7) {$\vdots$};
	 							\node (A2Fa)  at (6,  7) {$\vdots$};
	 							\node (B2Fa)  at (8,  7) {$\vdots$};
	 							\node (C2Fa)  at (10,  7) {$\vdots$};
	 							\node (HF1SQRT2a)  at (15,  7) {$\vdots$};
	 							\node (HF1SQRT2aa)  at (15,  8) {$\vdots$};
	 							\node (HF1SQRT2ab)  at (15,  9) {$\vdots$};
	 							\node (HF1SQRT2abc)  at (16.5,  9) {$\vdots$};
	 							\node (HF2inftySQRT2)  at (16.5,  10) {$F_\infty^{(2)} $};

	 							\node (F3ab)  at (-0.5,  9) {$\vdots$};
	 							\node (A2Fab)  at (6,  9) {$\vdots$};
	 							\node (B2Fab)  at (8,  9) {$\vdots$};
	 							\node (C2Fab)  at (10,  9) {$\vdots$};

	 							\node (F3ac)  at (-0.5,  8) {$\vdots$};
	 							\node (A2Fac)  at (6,  8) {$\vdots$};
	 							\node (B2Fac)  at (8,  8) {$\vdots$};
	 							\node (C2Fac)  at (10,  8) {$\vdots$};

	 							\node (Finfy)  at (-0.5, 10) {$ F_\infty$};
	 							\node (AinfyF)  at (6,  10) {$A_\infty(F)$};
	 							\node (BinfyF)  at (8,  10) {$B_\infty(F)$};
	 							\node (CinfyF)  at (10,  10) {$C_\infty(F)$};
	 							\node (HnfyF1SQRT2)  at (15,  10) {$F_\infty^{(1)}$};
	 							
	 							
	 							\draw[dashed,thick]  (F) --(F1) node[scale=0.4,  midway,  below right]{ {\;\; \bf  }};
	 							\draw (K) --(F1)  node[scale=0.4,  midway,  below right]{};
	 							
	 							\draw (F) --(AF)  node[scale=0.4,  midway,  below right]{};
	 							\draw (F) --(BF)  node[scale=0.4,  midway,  below right]{};
	 							\draw (F) --(CF)  node[scale=0.4,  midway,  below right]{};
	 							
	 							\draw (K) --(CK)  node[scale=0.4,  midway,  below right]{};
	 							\draw (CK) --(C1F)  node[scale=0.4,  midway,  below right]{};
	 							
	 							\draw (F1) --(A1F)  node[scale=0.4,  midway,  below right]{};
	 							\draw (F1) --(B1F)  node[scale=0.4,  midway,  below right]{};
	 							\draw (F1) --(C1F)  node[scale=0.4,  midway,  below right]{};
	 							
	 							\draw (CF) --(HF1)  node[scale=0.4,  midway,  below right]{};
	 							\draw (C1F) --(HF1SQRT2)  node[scale=0.4,  midway,  below right]{};

	 							\draw (AF) --(HF1)  node[scale=0.4,  midway,  below right]{};
	 							\draw (BF) --(HF1)  node[scale=0.4,  midway,  below right]{};
	 							
	 							\draw (A1F) --(HF1SQRT2)  node[scale=0.4,  midway,  below right]{};
	 							\draw (B1F) --(HF1SQRT2)  node[scale=0.4,  midway,  below right]{};

	 							\draw[dashed,thick]  (F1) --(F2) node[scale=0.4,  midway,  below right]{ {\;\; \bf  }};
	 							\draw[dashed,thick]  (AF) --(A1F) node[scale=0.4,  midway,  below right]{ {\;\; \bf  }};
	 							\draw[dashed,thick]  (BF) --(B1F) node[scale=0.4,  midway,  below right]{ {\;\; \bf  }};	  
	 							\draw[dashed,thick]  (CF) --(C1F) node[scale=0.4,  midway,  below right]{ {\;\; \bf  }};
	 							
	 							\draw[dashed,thick]  (F2) --(F3) node[scale=0.4,  midway,  below right]{ {\;\; \bf  }};
	 							\draw[dashed,thick]  (A1F) --(A2F) node[scale=0.4,  midway,  below right]{ {\;\; \bf  }};
	 							\draw[dashed,thick]  (B1F) --(B2F) node[scale=0.4,  midway,  below right]{ {\;\; \bf  }};	  
	 							\draw[dashed,thick]  (C1F) --(C2F) node[scale=0.4,  midway,  below right]{ {\;\; \bf  }};
	 							
	 							\draw[dashed,thick]  (HF1) --(HF1SQRT2) node[scale=0.4,  midway,  below right]{ {\;\; \bf  }};

	 							\draw (F2) --(A2F)  node[scale=0.4,  midway,  below right]{};
	 							\draw (F2) --(B2F)  node[scale=0.4,  midway,  below right]{};
	 							\draw (F2) --(C2F)  node[scale=0.4,  midway,  below right]{};
	 							
	 							\draw (HF1) --(HF2)  node[scale=0.4,  midway,  below right]{};
	 							\draw (HF1SQRT2) --(HF2SQRT2)  node[scale=0.4,  midway,  below right]{};
	 							\draw[dashed,thick]  (HF2) --(HF2SQRT2) node[scale=0.4,  midway,  below right]{ {\;\; \bf  }};
	 					\end{tikzpicture}}
	 					
	 				\end{figure}
	 			\end{center}
	 		\end{minipage}
	 		
	 }}
	 

	 Now we can proof our main theorem.

	 \begin{proof}[Proof of Theorem \ref{maintheorem} $($The Main Theorem$)$]
	 	According to Lemma \ref{cn12hpqs},   the $2$-class group of $ C(F)=\QQ(\sqrt{p }, \sqrt {q},  \sqrt{s}  )$ is cyclic of order $ \frac{1}{2}h_2(pqs)$. So if $h_2(pqs)=4$, then,   by Remark \ref{rem2} and Figure \ref{fig2}, the $2$-class groups of $A(F)$ and $B(F)$ are cyclic. If furthermore, the $2$-class number of $C(K)=\QQ(\sqrt {2p},  \sqrt{2s},\sqrt{q})$ equals to $4$, then the Hilbert $2$-class field tower of $F_1$  terminates at the first layer which is $F^{(1)}(\sqrt{2}) $. Therefore, the $2$-class number of $C_1(F)= \QQ(\sqrt{p }, \sqrt {q},  \sqrt{s},\sqrt{2})$ is equal to $2$. So    for all $n\geq 0$, the $2$-class number of  $C_n(F)= \QQ(\sqrt{p }, \sqrt {q},  \sqrt{s},2\cos( {2\pi}/{2^{n+2}}))$ is equal to $2$ (cf. Lemma \ref{lm fukuda}). 
	 	It follows from Figure \ref{fig1} that for all  $n\geq 0$, $ \mathrm{Gal}(\mathcal{L}(F_n)/F_n)$ is abelian.
	 	
	 	Now assume that $h_2(pqs)=8$  and that the $2$-class number of $C(K)$ equal $8$. Then 
	 	$2$-class numbers of $A(F) $ and $B(F)$ (resp.  $A_1(F) $ and $B_1(F)$) are equal  to $4$. It follows that for all $n\geq 1$, the $2$-class number  of  $A_n(F)= A_1(F)(2\cos( {2\pi}/{2^{n+2}}))$ is equal to $4$  (cf. Lemma \ref{lm fukuda}). So by Figure \ref{fig2}, for all $n\geq 0$, $ \mathrm{Gal}(\mathcal{L}(F_n)/F_n)$ is not abelian, more precisely, quaternion or dihedral  of order $8$.
	 \end{proof}
	 
	 \medskip
	 
	Finally,  Proposition \ref{mainprop} is a direct deduction from the first part of the proof and   Figures \ref{fig1}, \ref{fig2} and  \ref{fig3}. 
	 
	 \medskip

	 Using Pari/GP calculator software, we give the following prime numbers that satisfy the conditions of the main theorem.


	 \begin{table}[H]
	 	
	 	{  {\centering\small \centering\renewcommand{\arraystretch}{1.6}
	 			\setlength{\tabcolsep}{0.5cm}
	 			$$\begin{tabular}{c|c|c|c|c|c|c}
	 				\hline
	 				$p$& $q$ & $s$ &  $\left(\frac{	p}{	q}\right)$ &$\left(\frac{p}{s}\right)$ & $h_2(pqs)$ & $h_2(C(K))$ \\
	 				\hline
	 				13& 43 & 3 &1&1& 4 & 4 \\
	 				\hline
	 				61& 83 & 3 &1 &1&4 & 4 \\
	 				\hline
	 				29& 59 & 7 & 1&1&8 & 8 \\
	 				\hline
	 				53& 59 & 7 &1 &1&4 & 4 \\
	 				\hline
	 				37& 67 & 11 &1&1& 8 & 8 \\
	 				\hline
	 				29& 83 & 23 &1&1& 8 & 8 \\
	 				\hline
	 				29& 83 & 67 &1&1& 4 & 4 \\
	 				\hline
	 			\end{tabular} $$ }}
	 	
	 	\caption{Some primes satisfying the main theorem conditions}	 
	 \end{table}

	 \begin{remark}
	 	Since the dissemination of the preliminary preprint of this article, several recent papers (see \cite{ChemseddinGreenbergConjectureI,chemshamza,jerrair2,laxannoppre}) have cited it. Notably, the authors of \cite{jerrair2} obtained a partial improvement of the main theorem presented in this paper.
	 \end{remark}
	 
	 \section*{Acknowledgment}
I am grateful to the reviewers for their insightful comments, which have contributed to improving this paper.

 \end{document}